\newtheorem{thm}{Theorem}[section]
\newtheorem{lem}[thm]{Lemma}
\newtheorem{rem}[thm]{Remark}
\newtheorem{definition}[thm]{Definition}
\newtheorem{pro}[thm]{Proposition}
\numberwithin{equation}{section}
\begin{document}
\begin{frontmatter}
\title{Multiplicity of closed characteristics on $P$-symmetric compact convex hypersurfaces in $\mathbb{R}^{2n}$}
\author{Lei Liu}\ead{201511242@mail.sdu.edu.cn}\author{Li Wu}\ead{201790000005@sdu.edu.cn}
\address{School of Mathematics Department, Shandong University, Jinan 250001, P.R. China}
\begin{abstract}
There is a long standing conjecture that there are at least $n$ closed characteristics for any compact convex hypersurface $\Sigma$ in $\mathbb{R}^{2n}$, and the symmetric case, i.e. $\Sigma=-\Sigma$, has already been proved by C. Liu, Y. Long and C. Zhu in [Math. Ann., 323(2002), pp. 201-215]. In this paper, we extend the result in that paper to the $P$-symmetric case $\Sigma=P\Sigma$ for a certain class of symplectic matrix $P$, and prove that there are at least $[\frac{3n}{4}]$ closed characteristics on $\Sigma$ for any positive integer $n$, where $[a]:=\sup\{l\in\mathbb{Z},l\leq a\}$. To obtain our result, the key problem is to estimate (\ref{equ the estimation}) in which the method is based on the theorem called Common Index Jump Theorem. By using the Bott-type iteration formulas of Maslov index and Maslov-type index for a certain kind of iteration symplectic path, we provide the some new estimations (\ref{i+2S-v inequality 3}-\ref{i+2S-v inequality 4}), which are not considered in other papers.
\end{abstract}

\begin{keyword}
\ Compact convex hypersurfaces,\ $P$-symmetric closed characteristics,\ Iteration theory,\ Maslov-type index and Maslov index,\ Hamiltonian system. \\
\emph{AMS classification:} 58E05, 70H12, 34C25.
\end{keyword}
\end{frontmatter}
\renewcommand\thefootnote{}
\footnote{The author is partially supported by NSFC(No.11425105).}
\section{Introduction}
This paper deal with the multiplicity of closed characteristics on $P$-symmetric compact convex hypersurfaces in $\mathbb{R}^{2n}$. For each $C^2$-compact convex hypersurface $\Sigma\in\mathbb{R}^{2n}$ surrounding $0$, we will consider the following problem:
\begin{equation}\label{initial problem}
\left\{
\begin{aligned}{}
&\dot{y}(t)=JN_\Sigma(y(t)),\ y(t)\in\Sigma,\ \forall t\in\mathbb{R},\\
&y(\tau)=y(0), \ \tau>0,
\end{aligned}\right.
\end{equation}
with the standard symplectic matrix $J$, i.e. $\begin{pmatrix}0&-I_n\\I_n&0\end{pmatrix}$, and the outward normal unit vector $N_\Sigma(x)$ of $x\in \Sigma$. A solution $(\tau,y)$ of (\ref{initial problem}) is called a closed characteristic on $\Sigma$, and we call it a prime closed characteristic if $\tau$ is the minimal period. If two closed characteristics $(\tau,y)$ and $(\sigma,z)$ are not completely overlapping, then they are called geometrically distinct. For the set of closed characteristics and geometrically distinct ones $$[(\tau,y)]=\{(\sigma,z)\in\mathcal{J}(\Sigma)|y(\mathbb{R})=z(\mathbb{R})\},$$
we denote them by $\mathcal{J}(\Sigma)$ and $\hat{\mathcal{J}}(\Sigma)$, respectively.

In the last century, this problem attracted the attentions of many mathematicians. The milestone of this problem was made by P.H. Rabinowitz and A. Weinstein in 1978 \cite{Rabinowitz.1978,Weinstein.1978}, who proved that $${}^\#\hat{\mathcal{J}}(\Sigma)\geq 1,\ \forall\Sigma\in\mathcal{H}(2n).$$
The notation $\mathcal{H}(2n)$ denotes the collection of all hypersurfaces as considered in (\ref{initial problem}). After this, I. Ekeland, H. Hofer, L. Lassoued and A. Szulkin \cite{Ekeland.Hofer.1987,Ekeland.Lassoued.1987,A.Szulkin.1988} provide a stronger result that
\begin{eqnarray*}
{}^\#\hat{\mathcal{J}}(\Sigma)\geq 2,\ \forall \Sigma\in\mathcal{H}(2n),\ n\geq 2.
\end{eqnarray*}
This result was improved greatly by Y. Long and C. Zhu \cite{Long.zhu.2002}. They showed that
$${}^\#\hat{\mathcal{J}}(\Sigma)\geq [n/2]+1,\forall \Sigma\in \mathcal{H}(2n).$$
Later, the case $n=3,4$ for this conjecture were proved in \cite{W.H.L.2007} and \cite{Wang.2016}, respectively.

Besides, there are also plenty of results for special compact convex hypersurfaces. In \cite{Liu.Long.Zhu.2002}, C. Liu, Y. Long and C. Zhu gave the first surprising result that
\begin{eqnarray}
{}^\#\hat{\mathcal{J}}(\Sigma)\geq n,\ \forall \Sigma=-\Sigma. \label{equ Long and Zhu 2002}
\end{eqnarray}
This is the only result that proves the conjecture in high dimension. After that, Y. Dong and Y. Long \cite{Dong.Long.2004} studied the $\mathcal{P}$-symmetric case for $$\mathcal{P}=\mathrm{diag}(-I_{n-\kappa},I_{\kappa},-I_{n-\kappa},I_{\kappa}),\ \kappa\in\{1,\cdots,n\}.$$ Under certain assumptions about $\Sigma$, it holds that
\begin{eqnarray*}
{}^\#\hat{\mathcal{J}}(\Sigma)\geq n-2\kappa,\ \Sigma=\mathcal{P}\Sigma.
\end{eqnarray*}
The $P$-symmetric case was studied by D. Zhang in \cite{Zhang.2013}. He considered the symplectic and orthogonal matrix $P$ with satisfying $P^r=I_{2n}$ for some integer $r>1$, and proved there are at least two geometrically distinct closed characteristics $(\tau_j,x_j)$ satisfying that
$$x_j(t+\frac{\tau_j}{r})=Px_j(t),\ t\in \mathbb{R},\ j=1,2.$$
For other kinds of special hypersurfaces such as brake symmetric, pinched or star-shaped hypersurfaces, we refer to \cite{Bolotin.1978,Duan.Liu.2017,Liu.2014,Liu.Zhang.2014,Liu.Zhu.2018,Liu.Long.Wang.2014,Long.Zhang.Zhu.2006,Rabinowitz.1987,A.Szulkin.1989,Wang.2017} and it's references.

However there are still no other results about the total number of closed characteristics on $P$-symmetric ones yet. Therefore, in this paper, we will focus on the $P$-symmetric case. We define the symplectic group as
$$\mathrm{Sp}(2n)=\{M\in GL(2n,\mathbb{R})|M^TJM=J\}.$$
For $\Sigma\in\mathcal{H}_P(2n):=\{\Sigma\in\mathcal{H}(2n)|x,Px\in\Sigma,\forall x\in\Sigma\},\ P\in\mathrm{Sp}(2n)$, we call $\Sigma$ $P$-symmetric, and we also call a closed characteristic $(\tau,x)$ $P$-symmetric if $(\tau,x)$ belongs to the set $$\mathcal{J}_P(\Sigma):=\{(\tau,x)\in\mathcal{J}(\Sigma)|x(\mathbb{R})=Px(\mathbb{R})\}.$$

Then we prove the following main results.

\begin{thm}\label{main theorem}
Assume that $P\in\mathrm{Sp}(2n)$ and $\Sigma\in\mathcal{H}_P(2n)$. If $P$ is similar to the matrix
\begin{eqnarray}
R(-\theta)^{\diamond n-[\frac{n}{2}]}\diamond R(\theta)^{\diamond[\frac{n}{2}]}\label{assumption 1}
\end{eqnarray}
with $\diamond$ defined as (\ref{diamond product}), then
\begin{eqnarray}\label{equ2 thm1.1}
{}^\#\hat{\mathcal{J}}(\Sigma)\geq [\frac{3n}{4}],
\end{eqnarray}
where $R(\theta)=\begin{pmatrix}\cos\theta& -\sin\theta\\ \sin\theta& \cos\theta\end{pmatrix}$, $e^{im\theta}=1$ for some integer $m>1$ and $\frac{\theta}{2\pi}\notin\mathbb{Z}$.
\end{thm}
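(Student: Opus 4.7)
The plan is to argue by contradiction within the framework developed by Liu--Long--Zhu \cite{Liu.Long.Zhu.2002} for the antipodal case $P=-I_{2n}$, upgrading the index estimates to accommodate the general $P$-symmetry. Suppose that $\#\hat{\mathcal{J}}(\Sigma)=q<[\frac{3n}{4}]$, and enumerate the prime $P$-symmetric closed characteristics as $(\tau_1,x_1),\ldots,(\tau_q,x_q)$.

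First I would introduce the Ekeland dual action functional on a Hilbert space of loops satisfying the twisted boundary condition $x(t+\tau/m)=Px(t)$, where $m$ is the integer in the hypothesis with $e^{im\theta}=1$, so that its critical orbits correspond to $P$-symmetric closed characteristics together with their iterates. Because $P^m=I_{2n}$, the functional carries a natural $\mathbb{Z}_m$-action, and the local homological data at each critical orbit are encoded in a $P$-Maslov-type index pair $(i_P(x_j^k),\nu_P(x_j^k))$ that replaces the ordinary pair $(i(x),\nu(x))$ used in the antipodal case. Convexity of $\Sigma$ keeps all mean indices $\hat{i}(x_j)$ strictly positive, which will allow the use of iteration arguments.

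Second I would feed $\{x_j\}_{j=1}^{q}$ into the Common Index Jump Theorem to produce a large integer $N$ and iteration multiplicities $m_1,\ldots,m_q$ that force $i_P(x_j^{m_j})$ and $i_P(x_j^{m_j\pm s})$ to cluster in a narrow window around $N$. Applying the Bott-type decomposition of $i_P$ under iteration to the normal-form blocks $R(\pm\theta)^{\diamond k}$ of $P$, I would then establish the refined inequalities (\ref{i+2S-v inequality 3})--(\ref{i+2S-v inequality 4}) that tighten the relation between $i_P+2S-\nu_P$ at neighbouring iterates. Deriving these bounds is where the specific rotation structure $R(\pm\theta)$ enters essentially, and it is the step I expect to be the main obstacle: one has to track how the $R(\theta)$ and $R(-\theta)$ summands propagate through iterated $P$-symmetric symplectic paths and balance out to produce a uniform index gain on each side of $N$, and it is this delicate balance between the two rotation angles that will ultimately explain the constant $\tfrac{3}{4}$ appearing in the lower bound.

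Finally I would combine these estimates into a counting argument in the spirit of Liu--Long--Zhu. Each characteristic $x_j$ can contribute only a bounded number of relevant index values inside a window of length roughly $2[\frac{3n}{4}]$ around $N$, and this bound is sharpened precisely by (\ref{i+2S-v inequality 3})--(\ref{i+2S-v inequality 4}); on the other hand, the $\mathbb{Z}_m$-equivariant Morse theory of the dual action functional forces enough critical levels in this window to account for the equivariant homology of the ambient loop space. Matching the two counts yields $q\geq[\frac{3n}{4}]$, contradicting the hypothesis and completing the proof.
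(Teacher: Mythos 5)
Your overall plan --- extract index estimates via the Common Index Jump Theorem, sharpen them using the normal form of $P$, and count orbits in a window around a large $N$ --- matches the spirit of the paper, but there are several genuine gaps. First, you enumerate ``the prime $P$-symmetric closed characteristics as $(\tau_1,x_1),\ldots,(\tau_q,x_q)$'' after assuming $\#\hat{\mathcal{J}}(\Sigma)=q$, which silently assumes every closed characteristic on $\Sigma$ is $P$-symmetric. That is false in general: Proposition~\ref{properties of symmetric orbit}(3) only tells you each orbit is either $P$-symmetric or disjoint from its $P$-image, and the paper's Proposition~\ref{pro lower bound of close cha} explicitly decomposes $\hat{\mathcal{J}}(\Sigma)$ as in (\ref{equ hat(J)=(...)}) into a symmetric part and a part of asymmetric $P$-orbits of size $k_i\geq 2$. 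The estimate (\ref{equ the estimation}) with the good constant $n_1$ is only available for the $P$-symmetric orbits; the asymmetric ones get only the weaker inequality (\ref{i+2S-v with 2nd iteration}) for $x^2$, compensated for by the fact that they come in $P$-families of size at least two. A variational scheme that only sees $P$-symmetric orbits cannot conclude anything about the asymmetric ones.

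Second, the twisted loop space with boundary condition $x(t+\tau/m)=Px(t)$ only captures $P$-symmetric orbits with twisting parameter $l=1$. Proposition~\ref{properties of symmetric orbit}(3) only gives $x(t)=Px(t+\frac{l\tau}{m})$ for some $l$ coprime to $m$, and $l$ can vary from orbit to orbit; the paper flags this explicitly right after Theorem~\ref{main theorem} and resolves it in Step~2 of the proof of Theorem~\ref{main theorem 1} by replacing $P$ with $P^r$ so that the resulting bound is $l$-independent. Fixing $l=1$ for every prime orbit is precisely the additional hypothesis (ii) of Theorem~\ref{main theorem 2}, under which the conclusion is the stronger bound $n$, not $[\frac{3n}{4}]$. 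Third, the paper never introduces a twisted functional or $\mathbb{Z}_m$-equivariant Morse theory: it stays with the standard Clarke--Ekeland dual functional on the ordinary loop space $E_\alpha$ and the Fadell--Rabinowitz $S^1$-index, and the $P$-symmetry enters solely through the factorization $\gamma_x=\hat{\gamma}_x^{m,P}$ of the associated symplectic path from Proposition~\ref{properties of symmetric orbit}(4), which is what lets the Bott-type iteration formula of \cite{Liu.Tang.2015} produce (\ref{i+2S-v inequality 3})--(\ref{i+2S-v inequality 4}). Finally, you gesture at but do not actually derive the constant $\frac{3}{4}$; in the paper it arises from the bound $[\frac{n_1+n}{2}]$ in Proposition~\ref{pro lower bound of close cha} combined with $n_1=n-\max\{S^-_{P}(\omega),S^-_P(\bar{\omega})\}$ from Theorem~\ref{main theorem 1}, and the normal form (\ref{assumption 1}) forces that maximum to be $[\frac{n}{2}]$ or $[\frac{n}{2}]+1$.
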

Actually, this estimation is followed directly by Theorem \ref{main theorem 1}, and we can also obtained ${}^\#\hat{\mathcal{J}}(\Sigma)\geq n$ when $m$ is even, i.e. the result in \cite{Liu.Long.Zhu.2002}. Besides, according to Proposition \ref{properties of symmetric orbit}(3) below, we find out for any $P$-symmetric closed characteristic $(\tau,x)$, there exist $\l(x)\in\{1,\cdots,m-1\}$ such that $x(t)=Px(t+\frac{l(x)\tau}{m})$. However $l$ is indeterminate for each $(\tau,x)$. If we fix $l(x)\equiv 1$ for all prime closed characteristics, the following result will hold.
\begin{thm}\label{main theorem 2}
Let $P\in \mathrm{Sp}(2n)$ and integer $m>1$. Assume that the following conditions hold:\\
(i) $P$ is similar to the matrix $R(-\theta)^{\diamond n}$ with $\theta\in(0,\pi]$ and $e^{im\theta}=1$,\\
(ii) $x(t)=Px(t+\frac{\tau}{m})$ for any prime closed characteristic $(\tau,x)\in\mathcal{J}_P(\Sigma)$ and $\ t\in\mathbb{R}$, \\
then $${}^\#\hat{\mathcal{J}}(\Sigma)\geq n.$$
\end{thm}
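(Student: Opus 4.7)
The overall plan is to follow the contradiction argument of Liu--Long--Zhu~\cite{Liu.Long.Zhu.2002}, adapted to the $P$-symmetric setting, with condition~(ii) playing here the role that the antipodal identity $x(t)=-x(t+\tau/2)$ plays in their original proof. Suppose, for contradiction, that ${}^\#\hat{\mathcal{J}}(\Sigma)=q\leq n-1$, and let $(\tau_1,x_1),\ldots,(\tau_q,x_q)\in\mathcal{J}_P(\Sigma)$ be the prime $P$-symmetric closed characteristics. Condition~(ii) tells us that each $x_j$ is already invariant under the order-$m$ action $(t,x)\mapsto(t-\tau_j/m,Px)$; so the natural object to study is not the full symplectic path of $x_j$ on $[0,\tau_j]$, but rather the path $\gamma_j:[0,\tau_j/m]\to\mathrm{Sp}(2n)$ of the linearised flow on a single fundamental domain, whose full-period iterate is $(P\gamma_j)^m$ and whose $k$-th geometric iterate is $(P\gamma_j)^{km}$.

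Next I would bring in the index theory developed in the earlier sections: attach to each $\gamma_j$ the $P$-Maslov-type index pair $(i_P(\gamma_j),\nu_P(\gamma_j))$ together with its Bott-type iteration formula, which expresses $i_P((P\gamma_j)^{k})$ as a sum of $\omega$-indices over those roots of unity determined by the eigenvalues of $P$. Convexity of $\Sigma$ ensures positivity of the mean $P$-index of each prime $\gamma_j$, which is exactly the hypothesis required for the Common Index Jump Theorem applied to the finite family $\{\gamma_j\}_{j=1}^{q}$. The output is a large integer $N$ together with iteration exponents $m_1,\ldots,m_q$ such that the Maslov-type indices at the selected iterates $(P\gamma_j)^{m_j m}$ all fall in a common window around $N$.

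The decisive and most delicate step, and the place where I expect the main obstacle, is sharpening this window enough to force $q\geq n$. The generic Long--Zhu bound $|i_P(\gamma_j^{m_j})-N|\leq n$ is not by itself enough; one needs the refined simultaneous pair
\[
N-n\ \leq\ i_P(\gamma_j^{m_j}),\qquad i_P(\gamma_j^{m_j})+\nu_P(\gamma_j^{m_j})\ \leq\ N+n-1
\]
for every $j$. This is precisely the role of the new inequalities (\ref{i+2S-v inequality 3}--\ref{i+2S-v inequality 4}): they control the hybrid quantity $i_P+2S-\nu$ along the iteration of $(P\gamma_j)$ under assumption~(i), where every eigenvalue of $P$ is of the form $e^{\pm i\theta}$ with equal multiplicities. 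Translating those bounds into the sharp window above — which requires tracking carefully the $R(\pm\theta)$-block contributions of $P$ to the splitting numbers $S^{\pm}$ at each iterate — is, in my view, the technical heart of the proof.

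Granted the sharp window, the argument is concluded by the usual injectivity step for the Ekeland--Hofer / Clarke dual action functional: the critical values whose $P$-indices lie in a window of width at most $2n-1$ cannot accommodate fewer than $n$ geometrically distinct prime $P$-symmetric orbits, which contradicts the standing assumption $q\leq n-1$ and yields ${}^\#\hat{\mathcal{J}}(\Sigma)\geq n$ as claimed.
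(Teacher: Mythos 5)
Your overall scheme --- Common Index Jump Theorem applied to the prime orbits, a sharpened lower bound on $i+2S^+-\nu$ from the new Bott-type iteration formulas, and the injectivity map from Ekeland's dual variational theory --- is indeed the paper's strategy, which is packaged into Proposition~\ref{pro lower bound of close cha} (the counting machinery, contingent on a bound $n_1$) and Theorem~\ref{thm i+2S-v} (the new estimate). But there are two substantive gaps in your outline.

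First, you implicitly treat every closed characteristic on $\Sigma$ as $P$-symmetric, listing $\hat{\mathcal{J}}(\Sigma)$ as $\{(\tau_1,x_1),\ldots,(\tau_q,x_q)\}\subset\mathcal{J}_P(\Sigma)$. By Proposition~\ref{properties of symmetric orbit}(3) there is a genuine dichotomy: an orbit is either $P$-symmetric or satisfies $x(\mathbb{R})\cap Px(\mathbb{R})=\emptyset$, and hypothesis (ii) of Theorem~\ref{main theorem 2} constrains only the former type. The proof of Proposition~\ref{pro lower bound of close cha} deals with this by feeding the Common Index Jump Theorem the second iterates $\gamma^2_{x}$ of the asymmetric orbits as well, controlled via (\ref{i+2S-v with 2nd iteration}); those orbits then enter the count through their $P$-orbits $\{[x],[Px],\ldots\}$ of size $\geq 2$. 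Without tracking the asymmetric orbits, the injection count does not close. Second, and more decisive for this particular theorem, you never extract what assumption (i) actually buys. Passing through (\ref{relation between two indies}), the inequality (\ref{i+2S-v inequality 3}) yields $i_1(\gamma_x)+2S^+_{\gamma_x(\tau)}(1)-\nu_1(\gamma_x)\geq n - S^-_P(\omega)$ for $P$-symmetric orbits, so to reach $n_1=n$ --- and hence ${}^\#\hat{\mathcal{J}}(\Sigma)\geq[\frac{n_1+n}{2}]=n$ --- you need $S^-_P(\omega)=0$. That is exactly what $P\sim R(-\theta)^{\diamond n}$ supplies: by (\ref{pro splitting number 7}) and the $\diamond$-additivity (\ref{pro splitting number 11}), $S^+_P(e^{i\theta})=n$ and $S^-_P(e^{i\theta})=0$. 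Your reading of (i) as ``eigenvalues $e^{\pm i\theta}$ with equal multiplicities'' misses this one-sided Krein/splitting structure entirely; when the $R(\theta)$ and $R(-\theta)$ blocks are balanced as in Theorem~\ref{main theorem}, one has $S^-_P(\omega)\approx n/2$, hence $n_1\approx n/2$ and only the weaker count $[\frac{3n}{4}]$. (Incidentally, the paper's argument is direct rather than by contradiction: it computes $n_1=n$ and cites Proposition~\ref{pro lower bound of close cha}.)
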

\begin{rem}\label{remark 1.2}
If $P$ is orthogonal, we can also replace Theorem 1.1 in \cite{Liu.Tang.2015}, i.e. the Bott-type fomula for Maslov index, with Theorem 1.1 in \cite{Hu.Sun.2009} to prove these results.
\end{rem}

An outline of this paper is as follows. In Section 2, the Maslov-type index and Maslov index are briefly introduced, and then we list some properties of splitting numbers and Krein type numbers. In Section 3, firstly, we provide the properties of symmetric closed characteristics, which have been considered by X. Hu and S. Sun in \cite{Hu.Sun.2009} for orthogonal symplectic matrix $P$. Then we transfer the multiplicity problem into the estimation (\ref{equ the estimation}) by using the Common Index Jump Theorem, i.e. Lemma \ref{common index jump theorem}. In Section 4, by using the Bott-type iteration formulas, especially, Theorem 1.1 in \cite{Liu.Tang.2015}, we provide some new estimations (\ref{i+2S-v inequality 3}-\ref{i+2S-v inequality 4}) and prove the main results.
\section{Known properties}
In this section, we first introduce the Maslov-type index and Maslov index briefly. Then we will list some useful properties of Splitting numbers and Krein type numbers which will play important roles in this paper.

Let $\mathcal{P}_\tau(2n)$ be the collection of symplectic paths in $C([0,\tau],\mathrm{Sp}(2n))$ starting from $I_{2n}$ and
\begin{align*}
\mathrm{Sp}(2n)^0_\omega=&\{M\in\mathrm{Sp}(2n)|\det(M-\omega I_{2n})=0\},\ \mathrm{Sp}(2n)^*_\omega=\mathrm{Sp}(2n)\setminus\mathrm{Sp}(2n)^0_\omega
\end{align*}
for any $n\in\mathbb{N},\ \omega\in\mathbf{U}:=\{z\in\mathbb{C},|z|=1\}$ and $\tau>0$. The vector space $V$ is called symplectic if $V$ endows with a nondegenerate and closed two form $\omega_V$. Let $\tilde{J}:V\rightarrow V$ be a complex structure, i.e. $J^2=-id$. $J$ is said to be compatible with $\omega_V$ if
$$\omega_V(Ju,Jv)=\omega_V(u,v),\ \omega_V(v,Jv)>0,\forall u,v\in V.$$
A subspace $\Lambda\subset V$ is called Lagrangian if $\omega_V|_\Lambda=0$ and $\dim\Lambda=n$. Let $(\mathbb{R}^{2n},\omega_{st})$ be the standard symplectic space, where the standard symplectic form $$\omega_{st}((x_1,y_1),(x_2,y_2)):=(x_1,y_2)-(x_2,y_1),\ \forall (x_i,y_i)\in\mathbb{R}^{2n},i=1,2.$$ Each Lagrangian $\Lambda$ in $(\mathbb{R}^{2n},\omega_{st})$ will possess a Lagrangian frame $Z(\Lambda)=(X,Y)^T:\mathbb{R}^n\rightarrow\mathbb{R}^{2n}$ whose image is $\Lambda$, where $X,Y$ are $n\times n$ matrices satisfying $Y^TX=X^TY$. Let Lag$(n)$ be the Lagrangian Grassmannian which is the collection of all Lagrangian in $(\mathbb{R}^{2n},\omega_{st})$.

We introduce the maslov-type index and Maslov index as follows.
\begin{definition}\label{definition of index}
Let $\gamma\in\mathcal{P}_\tau(2n)$ and $\omega\in\mathbf{U}$, the Maslov-type index ($\omega$ index) is a integer-pair value map
$$(i_\omega,\nu_\omega):\mathcal{P}_\tau(2n)\rightarrow \mathbb{Z}\times\{0,1,\cdots,2n\},$$
in which
$$\nu_\omega(\gamma):=\dim_{\mathbb{C}}\ker_{\mathbb{C}}(\gamma(\tau)-\omega I_{2n})$$
and $i_\omega$ is uniquely characterized by five properties of Theorem 6.2.7 in \cite{Y.Long.2002}.

Let $\mathcal{P}([a,b],V)$ be the collection of pairs of Lagrangian paths in V. For any pair of Lagrangian paths $(\Lambda_1,\Lambda_2)$, the Maslov index (CLM index) is an integer value map
$$\mu:\mathcal{P}([a,b],V)\rightarrow \mathbb{Z}$$
uniquely characterized by properties I-VI in \cite{C.L.M.1994}.
\end{definition}
\begin{rem}
From the Theorem 2.6.6 in \cite{Y.Long.2002}, we known that for each symplectic matrix $M$, the symplectic path $Me^{Jt}\in\mathrm{Sp}(2n),\ t\in\mathbb{R},$ will transverse the singular cycle $\mathrm{Sp}(2n)^0_{\omega}$ for any $\omega\in\mathbf{U}$. Here we call the directions of path $Me^{Jt}$ and $Me^{-Jt}$ are the positive and negative direction of $M$, respectively. For any symplectic path $\gamma\in\mathcal{P}_\tau(2n)$, the Maslov-type index $i_\omega(\gamma)\in\mathbb{Z}$ in \cite{Y.Long.2002} is actually defined by counting how many half-turns $\gamma$ winds after perturbing the end point $\gamma(\tau)$ along the negative direction a little.

For any symplectic path $\gamma$ which may not start from $I_{2n}$, the Maslov index is originally defined in Theorem 1.1 in \cite{C.L.M.1994}. However , if $\Lambda_1\equiv V$ is contant, we can also explain $\mu(\Lambda_1,\Lambda_2)$ as $[e^{-\epsilon \tilde{J}}\Lambda:\Sigma(V)]$, which is the number of points (counting the multiplicity) that the path $e^{-\epsilon \tilde{J}}\Lambda$, i.e. perturbing the whole curve along the negative direction a little, intersects on the Maslov cycle $$\Sigma(V):=\{\Lambda\in\mathrm{Lag}(2n),\Lambda\cap V\neq\{0\}\}\subset\mathrm{Sp}(2n).$$ Where $\epsilon>0$ is small enough and the matrix $\tilde{J}$ is compatible with the symplectic form $\omega_V$. All the details of this two definitions can be found in \cite{Long.zhu.2000}, \cite{Y.Long.2002}, \cite{C.L.M.1994} and \cite{Hu.Sun.2009}. Especially, when $(V,\omega_V)=(\mathbb{R}^{4n},\omega_{st}\oplus(-\omega_{st}))$, the graph of any symplectic path $\gamma\in C([a,b],\mathrm{Sp}(2n))$ will become a Lagrangian path in $(\mathbb{R}^{4n},\omega_{st}\oplus(-\omega_{st}))$, i.e.
$$Gr(\gamma(t)):=\{(x,\gamma(t)x),x\in \mathbb{R}^{2n}\}\in\mathrm{Lag}(2n), \forall t\in[a,b].$$
Then we can define the Maslov index of $\gamma$ respect to $P\in\mathrm{Sp}(2n)$ as
$$\mu(Gr(P^T),Gr(\gamma))=[e^{-\epsilon \tilde{J}}Gr(\gamma):\Sigma(Gr(P^T))]=[Gr(e^{-2\epsilon J}\gamma):\Sigma(Gr(P^T))],$$ in which $\tilde{J}$ will be $J\oplus(-J)$ and is compatible with the symplectic form $\omega_{st}\oplus(-\omega_{st})$.
\end{rem}
By these definitions above, the relations between this two indies given by Lemma 4.6 in \cite{Hu.Sun.2009} still hold when $P$ is not just orthogonal but symplectic.
\begin{lem}\label{lem relation between two indies}
Let $P\in \mathrm{Sp}(2n)$, $\gamma\in\mathcal{P}_{\tau}(2n)$, we have
\begin{eqnarray}
\mu(Gr(\omega I),Gr(\gamma(t)))=
\left\{
\begin{aligned}{}
&i_1(\gamma)+n,\ \omega=1 \\
&i_\omega(\gamma),\ \omega\in\mathbf{U}\setminus\{1\},
\end{aligned}\right.\label{relation between two indies}
\end{eqnarray}
and
\begin{eqnarray}
\mu(Gr(\omega P^{-1}),Gr(\gamma(t)))=\mu(Gr(\omega),Gr(P\gamma(t)))=i_\omega(\tilde{\gamma}\ast\xi)-i_\omega(\xi)\label{relation between two indies 2}
\end{eqnarray}
for any path $\xi\in\mathcal{P}_\tau(2n)$ connecting $I_{2n}$ to $P$, where $\tilde{\gamma}=P\gamma$ and
\begin{eqnarray}\label{path connecting}
\tilde{\gamma}\ast\xi:=\left\{
\begin{aligned}{}
&\xi(2t),\ t\in[0,\frac{\tau}{2}]\\
&\tilde{\gamma}(2t-\tau),\ t\in[\frac{\tau}{2},\tau].
\end{aligned}\right.\label{def of joint paths}
\end{eqnarray}
\end{lem}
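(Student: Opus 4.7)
The plan is to adapt the proof of Lemma 4.6 in \cite{Hu.Sun.2009}, which established the same identities under the stronger hypothesis that $P$ be orthogonal, and to check that only the symplectic property of $P$ is actually used. The statement breaks into three pieces: formula (\ref{relation between two indies}), the first equality in (\ref{relation between two indies 2}), and the second equality there. Formula (\ref{relation between two indies}) does not involve $P$ at all and is the classical identification between Long's $\omega$-index $i_\omega(\gamma)$ and the CLM Maslov index of the graph $Gr(\gamma(\cdot))$ against the fixed Lagrangian $Gr(\omega I)\subset(\mathbb{R}^{4n},\omega_{st}\oplus(-\omega_{st}))$. It follows by checking that the right-hand side of (\ref{relation between two indies}) satisfies the five axiomatic properties of Theorem 6.2.7 in \cite{Y.Long.2002}; the $+n$ shift at $\omega=1$ accounts for the difference in normalization between the CLM index and Long's index at the endpoint $I_{2n}$. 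This argument is the one in \cite{Hu.Sun.2009} verbatim.

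For the first equality in (\ref{relation between two indies 2}), I would introduce the linear map $\Phi:\mathbb{R}^{4n}\to\mathbb{R}^{4n}$, $\Phi(x,y)=(x,P^{-1}y)$. Because $P^{-1}\in\mathrm{Sp}(2n)$, a one-line computation shows $\Phi$ preserves the symplectic form $\omega_{st}\oplus(-\omega_{st})$; this is the single place where a hypothesis on $P$ enters, and symplecticity alone suffices, so orthogonality plays no role. A direct computation gives
\[
\Phi(Gr(\omega I))=Gr(\omega P^{-1}), \qquad \Phi(Gr(P\gamma(t)))=Gr(\gamma(t)).
\]
Invariance of the CLM index under simultaneous symplectic transformations of both Lagrangian paths (Property VI in \cite{C.L.M.1994}) then yields the first equality $\mu(Gr(\omega P^{-1}),Gr(\gamma(t)))=\mu(Gr(\omega I),Gr(P\gamma(t)))$.

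For the second equality, the crucial observation is that the concatenated path $\tilde\gamma\ast\xi$ in (\ref{path connecting}) starts at $I_{2n}$, so (\ref{relation between two indies}) applies to it and to $\xi$ separately. Writing down those two identities and using additivity of the CLM index under concatenation, together with invariance under orientation-preserving reparametrization, the contribution of the second half of $\tilde\gamma\ast\xi$ is exactly $\mu(Gr(\omega I),Gr(P\gamma(t)))$, while the $+n$ correction at $\omega=1$ appears in both identities and cancels in the subtraction $i_\omega(\tilde\gamma\ast\xi)-i_\omega(\xi)$. The main bookkeeping subtlety, and what I expect to be the sole genuine obstacle, is that $\tilde\gamma=P\gamma$ does not itself start at $I_{2n}$, so (\ref{relation between two indies}) cannot be applied to $\tilde\gamma$ directly; introducing the auxiliary path $\xi$ is precisely the device that finesses this, and one should also verify independence of the right-hand side from the choice of $\xi$, which follows from the homotopy invariance of both indices on the space of symplectic paths with fixed endpoint $P$.
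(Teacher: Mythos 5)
Your argument is correct and follows the same route the paper implicitly takes: the paper offers no proof of its own here beyond citing Lemma~4.6 of \cite{Hu.Sun.2009} and asserting that orthogonality of $P$ can be relaxed to symplecticity, which is exactly what you verify by tracing where $P$ enters (the map $\Phi(x,y)=(x,P^{-1}y)$ preserving $\omega_{st}\oplus(-\omega_{st})$ uses only $P^{-1}\in\mathrm{Sp}(2n)$). One small wording issue: in justifying (\ref{relation between two indies}) via the uniqueness axioms of Theorem~6.2.7 in \cite{Y.Long.2002}, the thing to be checked is that the \emph{left}-hand side $\mu(Gr(\omega I),Gr(\gamma(\cdot)))$ (shifted by $-n$ when $\omega=1$) satisfies those axioms, since the right-hand side $i_\omega(\gamma)$ satisfies them by definition.
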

Note that the identity (\ref{relation between two indies 2}) is exactly same as the Maslov index $i^{P^{-1}}_\omega(\gamma)$ defined in \cite{Liu.Tang.2015}.

Denote $M$ as the end point $\gamma(\tau)$ of $\gamma$, the Krein type numbers $(P_\omega(M),Q_\omega(M))$ are defined by the total multiplicities of positive and negative eigenvalues of $\sqrt{-1}J$ restricted on $E_\omega=\ker(M-\omega I_{2n})^{2n}$, respectively. Since $\sqrt{-1}J$ is nondegenerate, then we have
\begin{eqnarray}
P_\omega(M)+Q_\omega(M)=\dim(E_\omega)\geq \nu_\omega(M),\ \forall \omega\in\mathbf{U}.\label{identity of Klein number}
\end{eqnarray}
The $m$-iteration path of $\gamma$ is defined by
\begin{eqnarray*}
\gamma^m(t)=\gamma(t-j\tau)\gamma(\tau)^j,&\forall j\tau\leq t\leq (j+1)\tau,&j=0,\cdots,m-1.
\end{eqnarray*}
In addition, the splitting numbers of $\gamma$ were introduced in Section 9.1 of \cite{Y.Long.2002} as
\begin{eqnarray}
S^{\pm}_{M}(\omega)=\displaystyle\lim_{\epsilon\rightarrow\pm 0}(i_{\exp(\sqrt{-1}\epsilon)\omega}(\gamma)-i_\omega(\gamma)),\label{equ initial def of splitting number}
\end{eqnarray}
which only depend on the end point $M$, actually. For splitting numbers, we have
\begin{pro}\label{pro of initial splitting number}
Let $M\in\mathrm{Sp}(2n)$, $\omega\in\mathbf{U}$ and $0\leq\bar{\theta}\leq\hat{\theta}\leq2\pi$. $\sigma(M)$ donates the spectral set of $M$. Then there hold
\begin{eqnarray}
&S^\pm_M(\omega)\geq 0,\forall \omega\in\mathbf{U},\ S^\pm_M(\omega)=0, \omega\notin\sigma(M), \label{pro splitting number 1}\\
&S^\pm_M(\omega)=S^\mp_M(\overline{\omega}),\ \nu_\omega(M)=\nu_{\overline{\omega}}(M), \label{pro splitting number 2}\\
&P_\omega(M)-S^+_M(\omega)=Q_\omega(M)-S^-_M(\omega)\geq 0, \label{pro Splitting number 8}\\
&S^+_M(\omega)+S^-_M(\omega)\leq\dim\ker(M-\omega I_{2n})^{2n},\ \omega\in\sigma(M), \label{pro splitting number 3}\\
&(S^+_{I_2}(1),S^-_{I_2}(1))=(1,1),\ (S^+_{-I_2}(-1),S^-_{-I_2}(-1))=(1,1), \label{pro splitting number 5}\\
&(S^+_{R(\theta)}(e^{\sqrt{-1}\theta}),S^-_{R(\theta)}(e^{\sqrt{-1}\theta}))=(0,1)\ and \label{pro splitting number 6}\\
&(S^+_{R(-\theta)}(e^{\sqrt{-1}\theta}),S^-_{R(-\theta)}(e^{\sqrt{-1}\theta}))=(1,0), \forall \theta\neq\pm 1, \label{pro splitting number 7}\\
&S^\pm_{M_1\diamond M_2}(\omega)=S^\pm_{M_1}(\omega)+S^\pm_{M_2}(\omega),\ M_j\in\mathrm{Sp}(2n_j),j=1,2\ and\ n=n_1+n_2, \label{pro splitting number 11}\\
&S^+_{M}(\omega)=S^+_{N}(\omega), \ for\ any\ N\in\mathrm{Sp}(2n)\ similar\ to\ M,\label{pro similarity}\\
&0\leq \nu_\omega(M)-S^-_M(\omega)\leq P_\omega(M),\ 0\leq \nu_\omega(M)-S^+_M(\omega)\leq Q_\omega(M), \label{pro 0<v-Spm(omega)<P,Q}\\
&(P_\omega(M),Q_\omega(M))=(Q_{\overline{\omega}}(M),P_{\overline{\omega}}(M)), \label{pro splitting number 12}\\
&\displaystyle\frac{1}{2}\sum_{\omega\in\mathbf{U}}\nu_{\omega}(M)\leq\sum_{\omega\in\mathbf{U}} P_\omega(M)=\sum_{\omega\in\mathbf{U}} Q_\omega(M)\leq n.\label{pro splitting number 13}\\
&\displaystyle i_{e^{\sqrt{-1}\hat{\theta}}}(\gamma)=i_1(\gamma)+\sum_{0\leq\theta<\hat{\theta}}S^+_M(e^{\sqrt{-1}\theta})
-\sum_{0<\theta\leq\hat{\theta}}S^-_M(e^{\sqrt{-1}\theta}).\label{pro splitting number 14}
\end{eqnarray}
\begin{equation}\label{pro splitting number 15}
\begin{array}{lll}
&\ \ \ \ \ \mu(Gr(e^{\sqrt{-1}\hat{\theta}}I_{2n}),Gr(P\gamma))-\mu(Gr(e^{\sqrt{-1}\bar{\theta}}I_{2n}),Gr(P\gamma))\\
=&\displaystyle \sum_{\bar{\theta}\leq\theta<\hat{\theta}}S^+_{PM}(e^{i\theta})-\sum_{\bar{\theta}<\theta\leq\hat{\theta}}S^-_{PM}(e^{i\theta})-
(\sum_{\bar{\theta}\leq\theta<\hat{\theta}}S^+_P(e^{i\theta})-\sum_{\bar{\theta}<\theta\leq\hat{\theta}}S^-_P(e^{i\theta})).
\end{array}
\end{equation}
\end{pro}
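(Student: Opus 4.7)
The plan is to dispatch properties \eqref{pro splitting number 1}--\eqref{pro splitting number 13} (together with \eqref{pro similarity}, \eqref{pro 0<v-Spm(omega)<P,Q}, and \eqref{pro splitting number 12}) by direct appeal to Chapter 9 of Long's monograph \cite{Y.Long.2002}, and then to derive the two index-sum formulas \eqref{pro splitting number 14} and \eqref{pro splitting number 15} from the definition \eqref{equ initial def of splitting number} by a cumulative-jump argument combined with Lemma \ref{lem relation between two indies}.

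For the first group: \eqref{pro splitting number 1} and \eqref{pro splitting number 2} follow immediately from \eqref{equ initial def of splitting number}, since $\omega\mapsto i_\omega(\gamma)$ is locally constant on $\mathbf{U}\setminus\sigma(M)$ and complex conjugation swaps the two one-sided perturbations. Identity \eqref{pro Splitting number 8} comes from the normal-form analysis of the Krein form $\sqrt{-1}J$ restricted to the generalized eigenspace $E_\omega$, and \eqref{pro splitting number 3} is the upper bound $S^+ + S^- \leq \dim E_\omega$ obtained from the same classification. The explicit values \eqref{pro splitting number 5}--\eqref{pro splitting number 7} are $2\times 2$ computations: perturb each of $I_2$, $-I_2$, $R(\pm\theta)$ along $e^{\pm J\epsilon}$, count crossings with the singular cycle $\mathrm{Sp}(2)^0_\omega$, and read off $(S^+,S^-)$. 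The additivity \eqref{pro splitting number 11} uses the additivity of $i_\omega$ under the $\diamond$-product; \eqref{pro similarity} is similarity invariance of $(i_\omega,\nu_\omega)$; \eqref{pro 0<v-Spm(omega)<P,Q} combines \eqref{pro Splitting number 8} with $\nu_\omega\leq\dim E_\omega=P_\omega+Q_\omega$; \eqref{pro splitting number 12} is the antisymmetry of $\sqrt{-1}J$ under conjugation; and \eqref{pro splitting number 13} is obtained by summing $P_\omega+Q_\omega\geq\nu_\omega$ over $\sigma(M)\cap\mathbf{U}$.

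For \eqref{pro splitting number 14}, I would integrate the jumps of the locally constant function $\theta\mapsto i_{e^{\sqrt{-1}\theta}}(\gamma)$ along the arc $\theta\in[0,\hat\theta]$. By \eqref{equ initial def of splitting number}, this function jumps by $+S^+_M(e^{\sqrt{-1}\theta_0})$ as $\theta$ crosses $\theta_0$ from the right and by $-S^-_M(e^{\sqrt{-1}\theta_0})$ when crossed from the left; telescoping these finitely many jumps from $\theta=0$ to $\theta=\hat\theta$ yields the claimed formula, with $S^+$ contributing on the half-open left endpoint and $S^-$ on the half-open right endpoint. The delicate point is matching the one-sided conventions at the endpoints $\theta=0$ and $\theta=\hat\theta$ with the direction of perturbation chosen in \eqref{equ initial def of splitting number}.

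Finally, for \eqref{pro splitting number 15}, I would take the difference of \eqref{relation between two indies 2} at $\omega=e^{\sqrt{-1}\hat\theta}$ and $\omega=e^{\sqrt{-1}\bar\theta}$ along a fixed path $\xi\in\mathcal{P}_\tau(2n)$ joining $I_{2n}$ to $P$. Since $Gr(\omega I_{2n})=Gr(\omega)$, the left-hand side of \eqref{pro splitting number 15} becomes
\begin{equation*}
\bigl(i_{e^{\sqrt{-1}\hat\theta}}(\tilde\gamma\ast\xi)-i_{e^{\sqrt{-1}\bar\theta}}(\tilde\gamma\ast\xi)\bigr)-\bigl(i_{e^{\sqrt{-1}\hat\theta}}(\xi)-i_{e^{\sqrt{-1}\bar\theta}}(\xi)\bigr),
\end{equation*}
and applying \eqref{pro splitting number 14} separately to the two symplectic paths $\tilde\gamma\ast\xi$ (whose endpoint is $PM$) and $\xi$ (whose endpoint is $P$) yields \eqref{pro splitting number 15} after the common $i_1$-terms cancel. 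The main obstacle throughout the proposition is the careful bookkeeping of half-open intervals and one-sided limits in \eqref{pro splitting number 14}--\eqref{pro splitting number 15}; once the sign conventions in \eqref{equ initial def of splitting number} are fixed, everything else is a routine assembly from the standard references.
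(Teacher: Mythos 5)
Your proposal mirrors the paper's proof almost exactly: the paper likewise dispatches the inequalities \eqref{pro splitting number 1}--\eqref{pro splitting number 13} by citing Lemma 9.1.6, Lemma 9.1.9, Lemma 1.8.14, Theorem 9.1.7, List 9.1.12, Theorem 9.1.10(1), Proposition 9.1.11 and Lemma 1.3.8 of Long's book together with \eqref{identity of Klein number}, and then obtains \eqref{pro splitting number 14} from Long's Proposition 9.1.11 (your telescoping of the one-sided jumps is precisely the content of that proposition) and \eqref{pro splitting number 15} by combining \eqref{relation between two indies 2} with \eqref{pro splitting number 14} applied separately to $\tilde\gamma\ast\xi$ and $\xi$, so that the two $i_1$-terms cancel. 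So the route is the same.

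One small caveat worth flagging: your sketch for \eqref{pro 0<v-Spm(omega)<P,Q} is not quite self-contained. You claim it "combines \eqref{pro Splitting number 8} with $\nu_\omega\leq P_\omega+Q_\omega$," but those two facts only give $\nu_\omega - S^-_M(\omega) \leq P_\omega + Q_\omega - S^-_M(\omega)$, which is weaker than $\nu_\omega - S^-_M(\omega)\leq P_\omega$ whenever $Q_\omega > S^-_M(\omega)$. The genuine content of \eqref{pro 0<v-Spm(omega)<P,Q} is the strict bound $\nu_\omega\leq P_\omega + S^-_M(\omega)$ (equivalently $\nu_\omega\leq Q_\omega + S^+_M(\omega)$, by \eqref{pro Splitting number 8}), and this requires the finer normal-form/Krein-form analysis of Long's Proposition 9.1.11; it is not a formal consequence of $\nu_\omega\leq\dim E_\omega$. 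Since the paper cites that proposition directly, this is only a wrinkle in your derivation, not a gap in the result, but the inference step you describe would not close on its own.
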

\begin{proof}
(\ref{pro splitting number 1},\ref{pro splitting number 2},\ref{pro splitting number 11}) follow from Lemma 9.1.6 and Lemma 9.1.9 in \cite{Y.Long.2002}. (\ref{pro Splitting number 8},\ref{pro splitting number 3}) follow from Lemma 1.8.14, Theorem 9.1.7 in \cite{Y.Long.2002} and the definition of Krein type numbers. (\ref{pro splitting number 5}-\ref{pro splitting number 7}) follow from List 9.1.12 in \cite{Y.Long.2002}. (\ref{pro similarity}) is a spacial case of Theorem 9.1.10(1). (\ref{pro 0<v-Spm(omega)<P,Q}) follows from Proposition 9.1.11 in \cite{Y.Long.2002}. (\ref{pro splitting number 12}) follows from Lemma 1.3.8 in \cite{Y.Long.2002}. (\ref{pro splitting number 13}) follows from the second identity of (\ref{pro splitting number 2}), (\ref{identity of Klein number}) and (\ref{pro splitting number 12}). (\ref{pro splitting number 14}) follows from Proposition 9.1.11 in \cite{Y.Long.2002}. (\ref{pro splitting number 15}) follows from (\ref{relation between two indies 2}) and (\ref{pro splitting number 14}).
\end{proof}
\begin{rem}\label{remark 2.1}
For any diagonalizable matrix $P\in \mathrm{Sp}(2n)$, we also have
$$S^+_{P}(\omega)=P_\omega(P),S^-_{P}(\omega)=Q_\omega(P),S^+_{P}(\omega)+S^-_{P}(\omega)=\nu_\omega(P), \forall \omega\in\mathbf{U}.$$
Indeed, let $\omega\in \mathbf{U}$. According to the diagnalizable condition, (\ref{identity of Klein number}), (\ref{pro splitting number 3}),(\ref{pro Splitting number 8}) and (\ref{pro splitting number 11}).
\begin{eqnarray*}
\nu_\omega(P)\leq P_\omega(P)+S^-_{P}(\omega)\leq P_\omega(P)+Q_\omega(P)=\dim\ker(P-\omega I_{2n})^{2n}=\nu_\omega(P).
\end{eqnarray*}
Then we have $S^-_{P}(\omega)=Q_\omega(P)$. Similarly, $S^+_{P}(\omega)=P_\omega(P)$ and the last identity follows directly.
\end{rem}

\section{Special properties of $P$-symmetric hypersurfaces}
In this section, we first provide some useful properties of closed characteristics. Then based on the method in \cite{Liu.Long.Zhu.2002}, we conclude that the multiplicity problem, i.e. estimating the total number of geometrically distinct closed characteristics on $\Sigma$, is equivalent to the estimation (\ref{equ the estimation}).

Let $j_\Sigma:\mathbb{R}^{2n}\rightarrow\mathbb{R}$ be a gauge function of $\Sigma$ defined by
\begin{eqnarray*}
j_\Sigma(0)=0,& j_\Sigma(x)=\inf\{\lambda>0,\frac{x}{\lambda}\in C\},\ x\neq 0.
\end{eqnarray*}
Fix a constant $\alpha\in(1,2)$ and define the Hamiltonian function $H_{\alpha}:\mathbb{R}^{2n}\rightarrow[0,\infty)$ by
\begin{eqnarray}
H_\alpha(x)=j_\Sigma(x)^\alpha,\ \forall x\in\mathbb{R}^{2n}.\label{hamiltonian with lapha}
\end{eqnarray}
Note that $H_\alpha\in C^1(\mathbb{R}^{2n},\mathbb{R})\cap C^2(\mathbb{R}^{2n}\setminus\{0\},\mathbb{R})$ is convex and $\Sigma=H^{-1}_\alpha(1)$. Since the gradient of $H_{\alpha}$ on $\Sigma$ is normal and nonzero, then the problem (\ref{initial problem}) is equivalent to the following fixed-energy problem
\begin{equation}\label{clo_cha_fix energy problem}
\left\{
\begin{aligned}{}
&H_\alpha(x)=1,\\
&\dot{x}=JH'_\alpha(x),\\
&x(0)=x(\tau).
\end{aligned}\right.
\end{equation}
It's well know that the solutions of (\ref{clo_cha_fix energy problem}) and (\ref{initial problem}) are in one to one correspondence with each other. It does not depend on the particular choice of $\alpha$.

For any $(\tau,x)\in\mathcal{J}(\Sigma)$, we denote by $\gamma_x\in\mathcal{P}_\tau(2n)$ the fundamental solution of the linear system
\begin{eqnarray}
\dot{y}(t)=JH_\alpha''(x(t))y(t),\label{linear hamiltonian system}
\end{eqnarray}
which is the linearization of system (\ref{clo_cha_fix energy problem}) respect to $x$. We also call $\gamma_x$ the associated symplectic path of $(\tau,x)$.
Consider $P\in\mathrm{Sp}(2n)$ and $\Sigma\in\mathcal{H}_P(2n)$, it implies that $H_\alpha(x)=H_\alpha(Px)$, $\forall x\in\mathbb{R}^{2n}$. Then we have
\begin{eqnarray}
H'_\alpha(x)=P^TH'_\alpha(Px),H''_\alpha(x)=P^TH''_\alpha(Px)P.\label{H',H''}
\end{eqnarray}

Denote by $(l,k)$ the greatest common divisor of $l,k\in N$. $l$ and $k$ are relatively prime integers if $(l,k)=1$.
Then we have following properties, which have been considered by X. Hu and S. Sun in \cite{Hu.Sun.2009} for orthogonal case.
\begin{pro}\label{properties of symmetric orbit}
Let $(\tau,x)\in\mathcal{J}(\Sigma)$ be a closed characteristic, and let $\gamma_x$ be the associated symplectic path of $(\tau,x)$, then we have\\
(1) $(\tau,Px)\in \mathcal{J}(\Sigma)$ is also a closed characteristic.\\
(2) $\gamma_{Px}(t)=P\gamma_x(t)P^{-1}$, where $\gamma_{Px}$ is the associated symplectic path of $(\tau,Px)$.\\
(3) If there exist $a,b\in[0,\tau)(a< b)$ and a smallest integer $k>0$ such that $x(a)=Px(b)$ and $x(t)=P^kx(t),\forall t\in[0,\tau)$, then $(\tau,x)$ is $P$-symmetric and
$$x(t)=Px(t+\frac{l\tau}{k}),$$
where $(l,k)=1$.\\
(4) If $(\tau,x)$ is $P$-symmetric as in (3), then $\gamma_x(t+\frac{l\tau}{k})=P^{-1}\gamma_x(t)P\gamma_x(\frac{l\tau}{k})$. Especially, $$\gamma_x(l\tau)=P^{-k}(P\gamma_x(\frac{l\tau}{k}))^k.$$
\end{pro}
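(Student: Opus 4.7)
The plan is to prove the four parts sequentially, with parts (1) and (2) following directly from (\ref{H',H''}) combined with the symplectic identity $PJP^T = J$, while parts (3) and (4) rely on uniqueness for autonomous ODEs. For (1), I would set $y(t) := Px(t)$ and verify $\dot{y} = JH'_\alpha(y)$: starting from $\dot{y} = PJH'_\alpha(x)$ and inserting $H'_\alpha(x) = P^TH'_\alpha(Px)$ from (\ref{H',H''}) produces $PJP^T H'_\alpha(Px) = JH'_\alpha(Px)$. For (2), I would set $\tilde{\gamma}(t) := P\gamma_x(t)P^{-1}$ and show it satisfies the linearization $\dot{\tilde{\gamma}} = JH''_\alpha(Px)\tilde{\gamma}$ with $\tilde{\gamma}(0) = I_{2n}$; the coefficient conversion uses the second identity of (\ref{H',H''}) together with $PJP^T = J$, after which ODE uniqueness against the definition of $\gamma_{Px}$ concludes.

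For (3), the key observation is that $x(a) = Px(b)$ together with part (1) and time-translation invariance of the autonomous system (\ref{clo_cha_fix energy problem}) produces two solutions of it, namely $x(t)$ and $Px(t + b - a)$, which agree at $t = a$. Uniqueness forces $Px(t + s) = x(t)$ with $s := b - a \in (0, \tau)$. Iterating gives $x(t + js) = P^{-j}x(t)$; combined with the hypothesis $P^k x \equiv x$ and the $\tau$-periodicity of $x$, this implies that $ks$ is a multiple of $\tau$, so $s = l\tau/k$ for some positive integer $l$. Coprimality $(l,k) = 1$ follows from the minimality of $k$: if $d := (l,k) > 1$, then iterating the relation $k/d$ times shows $P^{k/d}x \equiv x$, contradicting minimality. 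The $P$-symmetry of $(\tau,x)$ is then immediate from $x(t) = Px(t + l\tau/k)$.

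For (4), the plan is to set $\eta(t) := P^{-1}\gamma_x(t)P\gamma_x(l\tau/k)$ and verify that it satisfies the same linearized ODE as $t \mapsto \gamma_x(t + l\tau/k)$ with matching initial value $\gamma_x(l\tau/k)$; ODE uniqueness will then yield the identity. The verification uses part (3) to rewrite $x(t + l\tau/k) = P^{-1}x(t)$, the second part of (\ref{H',H''}) applied at $P^{-1}x(t)$ to relate $H''_\alpha(P^{-1}x(t))$ to $H''_\alpha(x(t))$, and the symplectic identity in the equivalent form $JP^T = P^{-1}J$. The ``especially'' assertion $\gamma_x(l\tau) = P^{-k}(P\gamma_x(l\tau/k))^k$ then follows by a short induction: iterating the first relation $k$ times yields the pattern $\gamma_x(jl\tau/k) = P^{-(j-1)}(P\gamma_x(l\tau/k))^{j-1}\gamma_x(l\tau/k)$, and evaluation at $j = k$ gives the claim after collecting one more factor of $P$.

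The main obstacle throughout is purely algebraic bookkeeping, tracking the interplay between $P$, $P^T$, $J$, and the first and second derivatives of $H_\alpha$, and checking that the symplectic identity $PJP^T = J$ consistently plays the role that orthogonality $PP^T = I$ plays in the analogous argument of \cite{Hu.Sun.2009}. No substantially new idea is expected beyond this substitution.
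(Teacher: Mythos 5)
Your overall approach mirrors the paper's: parts (1) and (2) are verified by direct substitution using (\ref{H',H''}) and $PJP^T=J$, part (3) uses ODE uniqueness plus the minimality of $k$, and the main identity of (4) is again ODE uniqueness with the same algebraic bookkeeping. The only real difference is cosmetic: the paper shows that $P\gamma_x(t+\frac{l\tau}{k})$ solves the equation satisfied by $\gamma_x$ and then normalizes, while you plug in the ansatz $\eta(t)=P^{-1}\gamma_x(t)P\gamma_x(\frac{l\tau}{k})$ directly; both rely on $JP^T=P^{-1}J$.

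However, the induction pattern you propose for the ``especially'' part of (4) is incorrect. Writing $A:=\gamma_x(\frac{l\tau}{k})$, the recursion from the main identity of (4) is $\gamma_x(\frac{(j+1)l\tau}{k})=P^{-1}\gamma_x(\frac{jl\tau}{k})PA$, whose closed form is
$$\gamma_x\Big(\frac{jl\tau}{k}\Big)=P^{-j}(PA)^j,$$
not $P^{-(j-1)}(PA)^{j-1}A$ as you wrote. Already at $j=2$ your expression collapses to $P^{-1}(PA)A=A^2$, whereas the true value is $P^{-1}APA$; these agree only when $P$ commutes with $A$. With the corrected formula the claim follows immediately at $j=k$ with no extra factor of $P$ to collect. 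Everything else in your argument is sound and matches the paper.
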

\begin{proof}
(1) Since $(\tau,x)\in\mathcal{J}(\Sigma)$, then $(\tau,x)$ solves the system (\ref{clo_cha_fix energy problem}), i.e.
\begin{eqnarray}
\dot{x}=JH'_\alpha(x),\label{dot(x)=JH'(x)}
x(0)=x(\tau).
\end{eqnarray}
$\Sigma\in\mathcal{H}_P(2n)$ implies that $Px\in\Sigma$ and $Px(0)=Px(\tau)$. By (\ref{H',H''}), we obtain
$$P\dot{x}=PJH'_\alpha(x)=PJP^TH'_\alpha(Px)=JH'_\alpha(Px).$$
Then (1) follows.\\
(2) Combining with $(\ref{H',H''})$ and identity $PJP^T=J$, it follows that
\begin{eqnarray}
\dot{\gamma}_x(t)=JH''_\alpha(x(t))\gamma_x(t)\label{equ linearize system}
\end{eqnarray}
implies that
$$P\dot{\gamma}_x(t)=JH''_\alpha(Px(t))P\gamma_x(t).$$
Since the fundamental solution starts from $I_{2n}$, we get (2).\\
(3) By condition $x(a)=Px(b)$ and the uniqueness of solution of (\ref{dot(x)=JH'(x)}), we have
$$x(t)=Px(t+b-a)=P^2x(t+2(b-a))=\cdots=P^kx(t+k(b-a)).$$
Since $x(t)=P^kx(t)$ with the smallest integer $k>0$, there exist $l<k$ such that $k(b-a)=l\tau$, i.e. $x(t)=Px(t+\frac{l\tau}{k})$, and $l,k$ are relatively prime integers. Otherwise, let $l=rl_1,k=rk_1, r>1$, we obtain $k_1<k$ such that
$$x(t)=Px(t+\frac{l_1\tau}{k_1})=\cdots=P^{k_1}x(t),$$
which is a contradiction.\\
(4) Replacing $t$ of (\ref{equ linearize system}) by $t+\frac{l\tau}{k}$, by (\ref{H',H''}) and (3) above, we get that
\begin{eqnarray*}
P\dot{\gamma}_x(t+\frac{l\tau}{k})=&PJH''_\alpha(x(t+\frac{l\tau}{k}))\gamma_x(t+\frac{l\tau}{k})\\
=&PJH''_\alpha(P^{-1}x(t))\gamma_x(t+\frac{l\tau}{k})\\
=&PJP^TH''_\alpha(x(t))P\gamma_x(t+\frac{l\tau}{k})\\
=&JH''_\alpha(x(t))P\gamma_x(t+\frac{l\tau}{k}).
\end{eqnarray*}
Since the fundamental solution starts from $I_{2n}$ again, we have
$$P\gamma(t+\frac{l\tau}{k})\gamma(\frac{l\tau}{k})^{-1}P^{-1}=\gamma(t),$$
then (4) follows.
\end{proof}
Note that (3) of Proposition \ref{properties of symmetric orbit} actually implies that there are only two possibilities for each closed characteristic $(\tau,x)$, $x(\mathbb{R})=Px(\mathbb{R})$ or $x(\mathbb{R})\cap Px(\mathbb{R})=\emptyset$.

Now, we will introduce the Common Jump Index Theorem since this theorem is so important in this paper. First we define the mean index of $\gamma\in\mathcal{P}_{\tau}(2n)$ by
$$\hat{i}_1(\gamma)=\displaystyle\lim_{m\rightarrow\infty}\frac{i_1(\gamma^m)}{m}=\frac{1}{2\pi}\int_{\mathbf{U}}i_\omega(\gamma)d\omega,$$
which is always a finite real number.

Consider the linear hamiltonian system (\ref{linear hamiltonian system}) where $(\tau,x)\in\mathcal{J}(\Sigma)$. Since $H_\alpha$ is autonomous and also convex, i.e. $H_\alpha''(x)>0$ on $\mathbb{R}\setminus\{0\}$, then by Proposition 15.1.3 in \cite{Y.Long.2002}, the associated symplectic path $\gamma_x$ satisfies
\begin{eqnarray}
\nu_1(\gamma_x)\geq 1,\ i_1(\gamma_x)=\sum_{0<s<1}\nu_1(\gamma_x(s\tau))+n\geq n.\label{index biger than n}
\end{eqnarray}
Then we have the following proposition.
\begin{pro}\label{lem the relation between index intervals}
Let $\Sigma\in\mathcal{H}(2n),\ (\tau,x)\in\mathcal{J}(\Sigma)$ and $m\in\mathbb{N}$, denote by $M=\gamma(\tau),i^m_x=i_1(\gamma^m_x)$, and $\nu^m_x=\nu_1(\gamma^m_x)$. Then
\begin{eqnarray}
&i^{m+1}_x-i^m_x\geq 2,\ i^{m+1}_x+\nu^{m+1}_x-1\geq i^{m+1}_x>i^m_x+\nu^m_x-1,\label{index interval}\\
&\hat{i}_1(\gamma)>2,\label{mean index bigger than zero}\\
&i^2_x+2S^+_{M^2}(1)-\nu^2_x\geq n.\label{i+2S-v with 2nd iteration}
\end{eqnarray}
\end{pro}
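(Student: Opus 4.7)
The proposal is to derive the three inequalities from Long's iteration theory \cite{Y.Long.2002}, using (\ref{index biger than n}) as the central convexity input. For (\ref{index interval}), I would invoke the iteration inequalities for convex Hamiltonian paths (Theorems~10.1.2 and 10.2.3 in \cite{Y.Long.2002}). Since (\ref{index biger than n}) guarantees $i_1(\gamma_x)\geq n$ and $\nu_1(\gamma_x)\geq 1$, the abstract iteration inequality gives $i^{m+1}_x-i^m_x\geq i^1_x+\nu^1_x-n\geq 1$, and a refinement using (\ref{pro splitting number 14}) at the roots of unity $\omega=e^{2\pi i k/m}$ together with the strict positivity $S^+_{M^m}(1)\geq 1$ upgrades the gap to $\geq 2$ and yields the non-nesting bound $i^{m+1}_x>i^m_x+\nu^m_x-1$ for the successive index intervals. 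Part (\ref{mean index bigger than zero}) then follows by dividing (\ref{index interval}) by $m$ and letting $m\to\infty$, giving $\hat{i}_1(\gamma_x)\geq 2$; the strict inequality $\hat{i}_1(\gamma_x)>2$ is Ekeland's classical convexity sharpening for closed characteristics on compact convex hypersurfaces (cf.\ Chapter~15 of \cite{Y.Long.2002}).

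The substantive step is (\ref{i+2S-v with 2nd iteration}). I would apply (\ref{index biger than n}) directly to the $2$-iterate $\gamma_x^2$, which corresponds to a doubly covered solution of the convex fixed-energy problem: $i_1(\gamma^2_x) = n + \sum_{0<s<1}\nu_1(\gamma^2_x(2s\tau))$. Because $\gamma^2_x(\tau)=M$ and $\nu_1(M)\geq 1$, the $s=1/2$ term alone forces $i_1(\gamma^2_x)\geq n+\nu_1(\gamma_x)$. The Bott decomposition at $\omega=1$ gives $\nu^2_x=\nu_1(M)+\nu_{-1}(M)$, so (\ref{i+2S-v with 2nd iteration}) reduces to
\[ 2S^+_{M^2}(1) + \sum_{0<s<1,\,s\neq 1/2}\nu_1(\gamma^2_x(2s\tau)) \;\geq\; \nu_{-1}(M). \]
Reducing $M$ to its symplectic normal form via (\ref{pro similarity}) and applying the additivity (\ref{pro splitting number 11}) together with the block data (\ref{pro splitting number 5})--(\ref{pro splitting number 7}) and List~9.1.12 of \cite{Y.Long.2002}, the endpoint piece $2S^+_{M^2}(1)$ already dominates the $\nu_{-1}(M)$ contribution coming from semisimple $\pm 1$-blocks and from all blocks whose eigenvalues avoid $\{\pm 1\}$.

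The main obstacle will be the non-semisimple Jordan blocks of $M$ at $-1$ of ``type I'' in Long's terminology: the pure endpoint blockwise count falls short by exactly the number of such blocks. The deficit must be absorbed by the intermediate Maslov crossings $\sum_{s\neq 1/2}\nu_1(\gamma^2_x(2s\tau))$, which depend on the whole symplectic path rather than just on $M$. To make this rigorous I would invoke the Ekeland--Hofer dual variational characterization of $i_1(\gamma^2_x)+2S^+_{M^2}(1)-\nu_1(\gamma^2_x)$ as the dimension of a maximal negative subspace of the dual action functional on the doubly iterated loop space; the strict convexity of $H_\alpha$ on $\mathbb{R}^{2n}\setminus\{0\}$ then supplies an $n$-dimensional negative subspace, yielding the required lower bound $\geq n$. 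This last step is insensitive to the $P$-symmetry and is essentially the convex-index estimate already exploited in \cite{Liu.Long.Zhu.2002}.
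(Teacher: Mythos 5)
The paper's proof is three direct citations: Theorem~10.2.4 gives (\ref{index interval}) from $i^1_x\geq n$ and $\nu^1_x\geq 1$ (which is (\ref{index biger than n})), Corollary~8.3.2(2) in \cite{Y.Long.2002} gives (\ref{mean index bigger than zero}), and Lemma~15.6.3 in \cite{Y.Long.2002} gives (\ref{i+2S-v with 2nd iteration}). You instead attempt a re-derivation from more primitive iteration inequalities, which is a genuinely different (and much longer) route, but it contains gaps.

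For (\ref{index interval}), your refinement step asserts ``the strict positivity $S^+_{M^m}(1)\geq 1$'' as an ingredient. This is not justified by $\nu_1(M)\geq 1$: the basic normal form $N_1(1,-1)=\begin{pmatrix}1&-1\\0&1\end{pmatrix}$ has $\nu_1=1$ but $S^+(1)=0$. More to the point, the refinement is unnecessary because Long's Theorem~10.2.4 already yields $i^{m+1}_x-i^m_x\geq 2$ and $i^{m+1}_x>i^m_x+\nu^m_x-1$ as stated, directly from $i^1_x\geq n$, $\nu^1_x\geq 1$; inventing a route through Theorems~10.1.2/10.2.3 plus a splitting-number fix re-opens a question that is already closed. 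Your derivation of $\hat i_1\geq 2$ from the gap bound is fine, but the strict inequality is just delegated to ``Ekeland's classical convexity sharpening,'' which is vague; the paper pins this to Corollary~8.3.2(2).

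The real gap is in (\ref{i+2S-v with 2nd iteration}). Your reduction is correct and useful: applying (\ref{index biger than n}) to $\gamma_x^2$, using the $s=1/2$ crossing $\nu_1(M)$, and Bott at $\omega=\pm1$, the inequality boils down to $2S^+_{M^2}(1)+\sum_{s\neq 1/2}\nu_1(\gamma^2_x(2s\tau))\geq \nu_{-1}(M)$, and you correctly identify that the only deficient blocks are the $N_1(-1,-1)$ type with $\nu_{-1}=1$ but $S^+_{M^2}(1)$ contribution $0$. However, at the crucial moment you simply assert that an ``Ekeland--Hofer dual variational characterization'' supplies the required negative subspace and closes the deficit. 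That assertion \emph{is} the content of Lemma~15.6.3: stating that the strict convexity forces the bound is restating the lemma, not proving it. As written, the argument appeals to the conclusion under a different name, so the key step is missing. The clean fix is to do what the paper does and cite Lemma~15.6.3 directly, or else actually carry out the normal-form/path analysis (via Long's theory of the convex $\Gamma$-index and the structure of $\gamma_x$ near a $N_1(-1,-1)$ block) showing that each such block forces a compensating intermediate crossing in $\gamma^2_x$.
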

\begin{proof}
(\ref{index interval}) is followed by Theorem 10.2.4 in \cite{Y.Long.2002} and (\ref{index biger than n}). (\ref{mean index bigger than zero}) is followed by Corollary 8.3.2(2) in \cite{Y.Long.2002}. (\ref{i+2S-v with 2nd iteration}) is followed by Lemma 15.6.3 in \cite{Y.Long.2002}.
\end{proof}

Apart from the Maslov-type index above, Ekeland also provided a very important result by using Fadell-Rabinowitz $S^1$ index theory. Let $E_\alpha:=\{u\in L^\beta([0,1],\mathbb{R}^{2n})|\int^1_0udt=0\}$, $\alpha^{-1}+\beta^{-1}=1$ and $\alpha\in(1,2)$. In Chapter V of \cite{Ekeland.1990}, the Clarke-Ekeland dual action functional on $E_\alpha$ is defined by
$$f_\alpha(u)=\int^1_0(\frac{1}{2}Ju\cdot\Pi u+H^*_\alpha(-Ju))dt,$$
where $H_\alpha$ is defined as (\ref{hamiltonian with lapha}) and the operator $\Pi:E^\beta\rightarrow E^\alpha$ is defined by $\frac{d}{dt}\Pi u=u$ and $\int^1_0\Pi udt=0$ which is compact.
Note that for any $(\tau,x)\in\mathcal{J}(\Sigma)$ and $m\in\mathbb{N}$, the action functional $f_\alpha$ has the corresponding critical point
\begin{eqnarray}
u^x_m(t)=(m\tau)^{\frac{\alpha-1}{\alpha-2}}\dot{x}(m\tau t), t\in[0,1], \label{critical point with 1 period}
\end{eqnarray}
which is also a 1-periodic orbit of Hamiltonian system of $H_\alpha$.

Since there is a nature $S^1$ action on $E_\alpha$ by shifting, then the Fadell-Rabinowitz $S^1$-cohomology index, i.e. ``$\mathrm{ind}$'', for $S^1$-invariant subset $[f_\alpha]_c:=\{u\in E_\alpha,f(u)< c\}\subset E_\alpha$ is defined by
$$\mathrm{ind}([f_\alpha]_c):=\sup\{k|c_1(E_\alpha)^{k-1}=c_1(E_\alpha)\cup\cdots\cup c_1(E_\alpha)\neq0\},$$
where $E_\alpha$ is a principle $S^1$ bundle, $c_1(E_\alpha)$ denote the first Chern class and $\cup$ is the cup product. For further details of Fadell-Rabinowitz index theory, we refer to their original paper \cite{Fradell.Rabinowitz.1978}. Define a sequence of critical values $c_k$ as
$$c_k=\inf\{c<0,\mathrm{ind}([f_\alpha]_c)\geq k\},k\in\mathbb{N},$$
which satisfy
$$-\infty<\displaystyle\min_{u\in E_\alpha}f_\alpha(u)=c_1\leq\cdots\leq c_k\leq\cdots<0,$$
and $c_k\rightarrow 0$ as $k\rightarrow+\infty$. By Theorem V.3.4 in Ekeland's book \cite{Ekeland.1990}, it holds that for any $k\in \mathbb{N}$, there exist $(\tau,x)\in\mathcal{J}(\Sigma)$ and $m\in\mathbb{N}$ such that the critical point $u^x_m$ of $f_\alpha$ defined by (\ref{critical point with 1 period}) satisfy
\begin{eqnarray}
f'_\alpha(u^x_m)=0,\ f_\alpha(u^x_m)=c_k,\ i^m_x\leq 2k-2+n\leq i^m_x+\nu^m_x-1.\label{Ekeland's inequality}
\end{eqnarray}
Under this result, Long and Zhu proved the following lemma.
\begin{lem}\label{lem The existance of injection map Psi}
Suppose ${}^\#\hat{\mathcal{J}}(\Sigma)<+\infty$. There exists an injection map $\Psi: \mathbb{N}\rightarrow\hat{\mathcal{J}}(\Sigma)\times\mathbb{N}(k\mapsto([(\tau,x)],m))$ such that (\ref{Ekeland's inequality}) happens for $(\tau,x)$.
\end{lem}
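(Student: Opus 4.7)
The plan is to feed Ekeland's Theorem V.3.4, encoded in (\ref{Ekeland's inequality}), through the Fadell-Rabinowitz $S^1$-cohomology index on the dual action functional $f_\alpha$, and to exploit the assumption ${}^\#\hat{\mathcal{J}}(\Sigma)<+\infty$ to promote the raw existence statement into an injection. For each $k\in\mathbb{N}$, Ekeland's result directly produces a critical point $u^{x_k}_{m_k}\in E_\alpha$ with $f_\alpha(u^{x_k}_{m_k})=c_k$ and $i^{m_k}_{x_k}\le 2k-2+n\le i^{m_k}_{x_k}+\nu^{m_k}_{x_k}-1$; sending $k\mapsto([(\tau_k,x_k)],m_k)$ gives a preliminary candidate $\widetilde\Psi$, and the task is to refine it into the required injection $\Psi$.

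Next I would analyze when $\widetilde\Psi$ can fail to be injective. Because (\ref{critical point with 1 period}) identifies the $S^1$-orbit of $u^x_m$ in $E_\alpha$ with the pair $([(\tau,x)],m)$, and the action value $f_\alpha(u^x_m)$ is determined by this pair, any failure of injectivity forces a coincidence of critical values $c_{k_1}=c_{k_2}$ with $k_1<k_2$. By the very definition of $c_k$ as an infimum, a maximal block $c_k=c_{k+1}=\cdots=c_{k+p-1}<c_{k+p}$ corresponds to an index jump $\ge p$ of $\mathrm{ind}$ across the common critical level $c=c_k$. Finiteness of $\hat{\mathcal{J}}(\Sigma)$ guarantees that only finitely many $S^1$-orbits of critical points lie at level $c$, so by the standard $S^1$-equivariant subadditivity/excision of the Fadell-Rabinowitz index, applied locally around each isolated critical $S^1$-orbit, the total jump decomposes as a sum of local contributions, each bounded above in terms of the nullity $\nu^m_x$ of the corresponding orbit. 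In particular there must be at least $p$ \emph{distinct} critical $S^1$-orbits $([(\tau_j,x_j)],m_j)$ at level $c$; reassigning the indices $k,\ldots,k+p-1$ among these $p$ distinct pairs and then gluing across all such blocks produces the global $\Psi$.

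The main obstacle is certifying that the Ekeland inequality (\ref{Ekeland's inequality}) \emph{still holds} for the pair $([(\tau_j,x_j)],m_j)$ once it has been reassigned to some $k'\in\{k,\ldots,k+p-1\}$. Concretely, one must show that the $p$ consecutive integers $\{2k'-2+n:k\le k'\le k+p-1\}$ can be simultaneously covered by the $p$ Ekeland windows $[i^{m_j}_{x_j},\,i^{m_j}_{x_j}+\nu^{m_j}_{x_j}-1]$ attached to the distinct critical orbits at level $c$. The strict iteration inequality $i^{m+1}_x>i^m_x+\nu^m_x-1$ in (\ref{index interval}) guarantees that, for a fixed closed characteristic, windows from different iterates are disjoint, while the convexity bound $i^m_x\ge n$ in (\ref{index biger than n}) anchors their positions; these structural facts, combined with the local Morse contribution bounds at each critical orbit, are precisely what makes the combinatorial reassignment work. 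I expect the delicate case to be highly degenerate iterates where a single orbit has large $\nu^m_x$ and could a priori absorb most of the index jump on its own, and ruling out this concentration via the local Fadell-Rabinowitz bound on each orbit's contribution will be the technically most involved step.
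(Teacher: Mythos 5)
Your outline correctly identifies the skeleton of the argument that sits behind Long's Lemma 15.3.5(i), which is all the paper actually invokes (the paper's ``proof'' is a one-line citation). However, there is a genuine logical gap in the middle of your argument. You assert that the total Fadell--Rabinowitz index jump across a degenerate level $c$ decomposes into local contributions ``each bounded above in terms of the nullity $\nu^m_x$ of the corresponding orbit'' and then conclude ``in particular there must be at least $p$ distinct critical $S^1$-orbits.'' That implication is false as stated: if a single orbit's local contribution could be as large as its nullity, a single highly degenerate orbit could absorb the whole jump, and you would get nothing. What makes the count work is a sharper fact you don't state: for an \emph{isolated} free $S^1$-orbit $S^1\cdot u$, the Fadell--Rabinowitz index of any sufficiently small $S^1$-invariant neighborhood is exactly $1$ --- independent of the nullity --- so by subadditivity and the deformation lemma each isolated critical orbit contributes at most $1$ to the jump, and a jump of size $p$ forces at least $p$ distinct orbits. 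Your ``bounded by nullity'' bound is the right thing to quote for the \emph{Morse window} (the set of degrees where the critical groups can be nontrivial), but it is the wrong bound for the index-jump count; conflating the two breaks the derivation of ``$\ge p$ orbits.''

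The second, deeper gap is the one you flag yourself at the end: having $p$ distinct orbits at level $c$ is not by itself enough. You must still match each $k'\in\{k,\ldots,k+p-1\}$ to a \emph{distinct} orbit whose Morse window $[i^{m}_x,\,i^{m}_x+\nu^{m}_x-1]$ contains $2k'-2+n$. Your appeal to the strict iteration inequality $i^{m+1}_x > i^m_x + \nu^m_x - 1$ and to $i^m_x\ge n$ is not on point here: those facts separate windows of different \emph{iterates of the same} $x$ and give a common lower bound for all windows, but they say nothing about how windows of orbits coming from \emph{different} closed characteristics overlap at a single level $c$, which is precisely where a failure of the marriage-type condition could a priori occur. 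The resolution requires the refined $S^1$-equivariant Morse/Lusternik--Schnirelmann argument carried out in Long's Lemma 15.3.5 (or Long--Zhu, Ann.\ Math.\ 155 (2002)): roughly, the classes $c_1^{k-1},\ldots,c_1^{k+p-2}$ die one at a time as one excises critical orbits, and each excision is linked to a specific degree lying in that orbit's Morse window. Without this bookkeeping, the ``combinatorial reassignment'' you propose is a hope rather than a proof, and you have correctly identified it as the technically involved step --- but the proposal does not close it, and its current justification rests on the miscalibrated nullity bound.
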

\begin{proof}
This Lemma is followed by Lemma 15.3.5(i) in \cite{Y.Long.2002}.
\end{proof}
Now we show the famous Common Index Jump Theorem proved by Long and Zhu in \cite{Long.zhu.2002} also in Chapter 11 of Long's book \cite{Y.Long.2002}.
\begin{lem}\label{common index jump theorem}
For $k=1,\cdots,q$, let $\gamma_k\in\mathcal{P}_{\tau_k}(2n)$ be a finite family of symplectic paths, denote by $M_k=\gamma(\tau_k),i^m_k=i_1(\gamma^m_k)$, and $\nu^m_k=\nu_1(\gamma^m_k)$. Suppose
\begin{eqnarray*}
\hat{i}(\gamma_k)>0,\ \forall k=1,\cdots,q.
\end{eqnarray*}
Then there exist infinitely many $(N,m_1,\cdots,m_q)\in\mathbb{N}^{q+1}$ with $N\geq n$ such that
\begin{align*}
\nu^{2m_k-1}_k=\nu^1_k,\ &\nu^{2m_k+1}_k=\nu^1_k,\\
i^{2m_k-1}_k+\nu^{2m_k-1}_k=&2N-(i^1_k+2S^+_{M_k}(1)-\nu^1_k),\\
i^{2m_k+1}_k=&2N+i^1_k,\\
i^{2m_k}_k\geq 2N-n,\ &i^{2m_k}_k+\nu^{2m_k}_k\leq 2N+n,
\end{align*}
for $k=1,\cdots,q$.
\end{lem}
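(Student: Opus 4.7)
The plan is to follow the approach of Long's index iteration theory \cite{Y.Long.2002}, combining three ingredients: the Bott-type iteration formula for the Maslov-type index, the decomposition of each end point $M_k$ into symplectic normal form, and a simultaneous Diophantine approximation of the rotation angles attached to the unit-circle eigenvalues of the $M_k$.

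First, for each $k$, one writes $M_k$ (up to symplectic similarity) as a $\diamond$-product of basic blocks. The finite set of rotation angles $\Theta_k := \{\theta \in (0,2\pi)\setminus\{\pi\} : e^{i\theta} \in \sigma(M_k)\cap\mathbf{U}\}$ controls the iteration via Bott's formula
$$i_1(\gamma_k^m) = \sum_{\omega^m=1} i_\omega(\gamma_k), \qquad \nu_1(\gamma_k^m) = \sum_{\omega^m=1} \nu_\omega(\gamma_k).$$
By (\ref{pro splitting number 14}) the function $\omega\mapsto i_\omega(\gamma_k)$ is piecewise constant on $\mathbf{U}$, with jumps equal to splitting numbers and located exactly at the eigenvalues of $M_k$. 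Averaging produces $\hat{i}(\gamma_k) > 0$, i.e. a net positive rotation per iteration.

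Second, I would apply Kronecker's simultaneous Diophantine approximation (or Dirichlet's theorem) to the finite set $\{\hat i(\gamma_k)/2\}_{k=1}^q \cup \{\theta/(2\pi) : \theta\in\Theta_k,\ 1\leq k\leq q\}$: for arbitrarily small $\varepsilon>0$ there exist arbitrarily large $N\in\mathbb{N}$ and positive integers $m_1,\ldots,m_q$ with $|N/m_k - \hat i(\gamma_k)/2| < \varepsilon$ and with $m_k\theta/(2\pi)$ within $\varepsilon$ of an integer for every $\theta\in\Theta_k$. Choosing $\varepsilon$ smaller than the minimal spacing of distinct angles in $\bigcup_k\Theta_k$ ensures that every unit-circle eigenvalue of $M_k$ sits in a narrow arc around $1$ for the iterated end point $M_k^{2m_k}$, and that the neighbouring roots of unity $e^{2\pi i/(2m_k\pm 1)}$ avoid every eigenvalue of $M_k$ other than $1$; this yields the two identities $\nu_1(\gamma_k^{2m_k\pm 1}) = \nu_1(\gamma_k)$.

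Third, I would insert this approximation into the Bott sum. Grouping the $(2m_k\pm 1)$-th roots of unity according to their positions relative to the eigenvalues of $M_k$ shows that each block contributes, up to a bounded error which cancels, the average $\hat i(\gamma_k)$, so that $i_1(\gamma_k^{2m_k+1}) = 2N + i_1(\gamma_k)$ and the reverse identity $i_1(\gamma_k^{2m_k-1}) + \nu_1(\gamma_k^{2m_k-1}) = 2N - (i_1(\gamma_k) + 2S^+_{M_k}(1) - \nu_1(\gamma_k))$ hold once $\varepsilon$ is small. The asymmetric $2S^+_{M_k}(1)$ term reflects the jump of $i_\omega(\gamma_k)$ across $\omega=1$ given by (\ref{pro splitting number 14}). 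The sandwich $2N-n \leq i_1(\gamma_k^{2m_k}) \leq i_1(\gamma_k^{2m_k}) + \nu_1(\gamma_k^{2m_k}) \leq 2N+n$ then follows from the monotonicity (\ref{index interval}) together with the universal bound $|i_\omega(\gamma) - i_1(\gamma)| \leq n$, itself a consequence of (\ref{pro splitting number 13}) and (\ref{pro splitting number 14}).

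The main obstacle lies in the simultaneous Diophantine step: one must produce a single $N$ and compatible $m_k$'s that work for all $q$ paths at once, and the approximation must be sharp enough to rule out collisions of iterated roots of unity with eigenvalues of $M_k$ other than $1$. This is the heart of the argument; Kronecker's density theorem provides infinitely many admissible tuples $(N,m_1,\ldots,m_q)$ because $N$ can be chosen arbitrarily large.
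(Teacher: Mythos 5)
The paper does not actually prove this lemma; it simply states it and cites Long--Zhu \cite{Long.zhu.2002} and Chapter~11 of \cite{Y.Long.2002}. So the comparison is between your sketch and the proof found in those references.

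Your outline has the right skeleton --- symplectic normal form of $M_k$, Bott-type iteration to reduce to angle data, and a simultaneous number-theoretic step to choose $(N,m_1,\dots,m_q)$ --- but two of the three ingredients are materially underspecified in a way that would block a direct write-up. First, the exact identities $i^{2m_k+1}_k=2N+i^1_k$ and $i^{2m_k-1}_k+\nu^{2m_k-1}_k=2N-(i^1_k+2S^+_{M_k}(1)-\nu^1_k)$ do not fall out of ``grouping the roots of unity'' in the raw Bott sum: one needs the \emph{abstract precise iteration formula} (Theorem~8.3.1 in \cite{Y.Long.2002}), which writes $i_1(\gamma^m)$ as $m\hat i(\gamma)$ plus a correction that is a piecewise-constant integer function of the fractional parts $\{m\theta/2\pi\}$, with coefficients given by the splitting numbers $S^\pm_{M}(e^{i\theta})$. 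It is only after this reduction that ``the bounded error cancels'' becomes a concrete, checkable claim; the Bott formula alone leaves you comparing two infinite-looking sums without the bookkeeping to certify equality. Second, the Diophantine step is not correctly set up for Kronecker or Dirichlet to apply as stated: you need a \emph{single} integer $N$ to work for all $k$ simultaneously, while your formulation $|N/m_k-\hat i_k/2|<\varepsilon$ has $q+1$ free integer unknowns coupled nonlinearly. The standard resolution (Long--Zhu Theorems~4.1--4.2, or Theorem~11.1.1 in \cite{Y.Long.2002}) introduces a one-parameter family: take a large $T$, set $m_k$ to be the nearest integer to $T/\hat i_k$, and apply the torus orbit-closure theorem to the vector whose components are $1/\hat i_k$ and $\theta_{kj}/(\pi\hat i_k)$. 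Because the identity element always lies in the closure of any orbit in a torus, one gets infinitely many $T$ for which all the relevant fractional parts are simultaneously small, regardless of rational dependencies among the angles --- a point that Kronecker's theorem by itself (which requires $\mathbb Q$-independence) does not give. With those two pieces in place, your closing sandwich bound for $i^{2m_k}_k$ does follow from the monotonicity (\ref{index interval}) combined with the already-established equalities at $2m_k\pm1$, but the way you phrase it (a universal bound $|i_\omega(\gamma)-i_1(\gamma)|\le n$ applied to the iterate) is not the estimate that is actually used. In short: right strategy, but the precise iteration formula and the orbit-closure lemma are genuinely missing, and those are exactly the non-trivial parts of the Long--Zhu argument.
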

By using this Common Index Jump Theorem, we have the following Proposition. Note that this proof is basically using the approach in [15] with a small modification. However, we would like to show the details of this proof because the following Proposition is the key in this paper.
\begin{pro}\label{pro lower bound of close cha}
Let $P\in \mathrm{Sp}(2n)$, $\Sigma\in \mathcal{H}_P(2n)$ and assume ${}^\#\hat{\mathcal{J}}(\Sigma)<+\infty$. If there exists an integer $n_1>0$ such that for any prime symmetric closed characteristic $[(\tau,x)]\in\hat{\mathcal{J}}(\Sigma)$,
\begin{eqnarray}
i(\gamma_x)+2S^+_{\gamma_x(\tau)}(1)-\nu(\gamma_x)\geq n_1,\label{equ the estimation}
\end{eqnarray}
then $${}^\#\mathcal{J}(\Sigma)\geq[\frac{n_1+n}{2}].$$
\end{pro}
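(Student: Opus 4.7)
The plan is to combine the Common Index Jump Theorem (Lemma \ref{common index jump theorem}) with the Fadell--Rabinowitz injection (Lemma \ref{lem The existance of injection map Psi}) in the style of Long--Zhu, using the hypothesis (\ref{equ the estimation}) to push the three jump-iterate windows apart. Enumerate $\hat{\mathcal{J}}(\Sigma)=\{[(\tau_k,x_k)]\}_{k=1}^q$ (finite by assumption), denote the associated symplectic paths by $\gamma_k$, set $M_k=\gamma_k(\tau_k)$, $i_k^m=i_1(\gamma_k^m)$, $\nu_k^m=\nu_1(\gamma_k^m)$, and put $I_k^m:=[i_k^m,\,i_k^m+\nu_k^m-1]$. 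Since $\hat{i}_1(\gamma_k)>2>0$ by Proposition \ref{lem the relation between index intervals}, Lemma \ref{common index jump theorem} will produce an integer $N\ge n$ and $(m_1,\dots,m_q)\in\mathbb{N}^q$ realising its four identities simultaneously for $k=1,\dots,q$.

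The first key step is to turn those identities, together with the hypothesis $i_k^1+2S^+_{M_k}(1)-\nu_k^1\ge n_1$ and the convex lower bound $i_k^1\ge n$ from (\ref{index biger than n}), into a trichotomy for the location of $I_k^m$ on the integer line. Direct substitution gives $\max I_k^{2m_k-1}=2N-(i_k^1+2S^+_{M_k}(1)-\nu_k^1)-1\le 2N-n_1-1$, while $\min I_k^{2m_k+1}=2N+i_k^1\ge 2N+n$, and $I_k^{2m_k}\subset[2N-n,\,2N+n-1]$. Combined with the strict monotonicity $i_k^{m+1}>i_k^m+\nu_k^m-1$ from (\ref{index interval}), this will show that for every $k$, $I_k^m\subset(-\infty,\,2N-n_1-1]$ whenever $m\le 2m_k-1$, and $I_k^m\subset[2N+n,\,+\infty)$ whenever $m\ge 2m_k+1$.

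Next I apply Lemma \ref{lem The existance of injection map Psi} to obtain an injection $\Psi:\mathbb{N}\to\hat{\mathcal{J}}(\Sigma)\times\mathbb{N}$, $\Psi(l)=([(\tau_{j(l)},x_{j(l)})],m(l))$, satisfying $2l-2+n\in I_{j(l)}^{m(l)}$. Restrict attention to the window $\mathcal{W}:=\{N+1-[\frac{n_1+n}{2}],\dots,N\}$, which has cardinality $[\frac{n_1+n}{2}]$; for every $l\in\mathcal{W}$ a short calculation yields $2l-2+n\in[2N-n_1,\,2N+n-2]$, and this range meets neither the left nor the right branch of the trichotomy above. Hence $m(l)=2m_{j(l)}$ is forced for each $l\in\mathcal{W}$, and the injectivity of $\Psi$ upgrades this to injectivity of $l\mapsto j(l):\mathcal{W}\to\{1,\dots,q\}$: if $j(l_1)=j(l_2)=k$ then $m(l_1)=2m_k=m(l_2)$, forcing $\Psi(l_1)=\Psi(l_2)$. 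This yields $q\ge[\frac{n_1+n}{2}]$, as desired.

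The main technical point will be the trichotomy in the second paragraph: one must package (\ref{equ the estimation}) together with the convex lower bound $i_k^1\ge n$ tightly enough that the three CIJT-iterate intervals separate cleanly and the window $\mathcal{W}$ lies entirely in the ``middle'' region, avoiding both the left cluster ending at $2N-n_1-1$ and the right cluster starting at $2N+n$. Once that numerical book-keeping is in place, the rest of the argument is a routine pigeonhole via $\Psi$, and no further input from the $P$-symmetric geometry of $\Sigma$ is used in this proposition beyond what is absorbed into the hypothesis (\ref{equ the estimation}).
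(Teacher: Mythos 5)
Your argument implicitly treats the hypothesis (\ref{equ the estimation}) as if it held for every prime closed characteristic on $\Sigma$, but the statement of the proposition restricts it to prime \emph{symmetric} closed characteristics, i.e.\ those $[(\tau,x)]$ with $x(\mathbb{R})=Px(\mathbb{R})$. Nothing is assumed for the $P$-asymmetric ones, and in general all you get for them is $i^1_k+2S^+_{M_k}(1)-\nu^1_k\geq 0$ (from (\ref{index biger than n}) and (\ref{pro 0<v-Spm(omega)<P,Q})), so $\max I_k^{2m_k-1}$ may sit anywhere up to $2N-1$. Your ``trichotomy'' then fails to separate the left cluster from the middle window $\mathcal{W}$, and the forced equality $m(l)=2m_{j(l)}$ collapses whenever $j(l)$ indexes an asymmetric orbit. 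Consequently the map $l\mapsto j(l)$ need not be injective on $\mathcal{W}$, and the pigeonhole count $q\geq[\frac{n_1+n}{2}]$ does not follow.

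The paper avoids this by splitting $\hat{\mathcal{J}}(\Sigma)$ into the $p$ symmetric classes and $q$ asymmetric $P$-orbits as in (\ref{equ hat(J)=(...)}), applying the Common Index Jump Theorem not only to the $p+q$ primitive paths but also to the \emph{second iterates} of the $q$ asymmetric ones, and using (\ref{i+2S-v with 2nd iteration}) (the universal convexity bound $i^2+2S^+_{M^2}(1)-\nu^2\geq n$ for second iterates) in place of (\ref{equ the estimation}) for the asymmetric branch. This only shows $m(k)\in\{2m_{j(k)}-1,2m_{j(k)}\}$ for asymmetric $j(k)$, giving the weaker count ${}^\#S_2\leq 2q$, which is then compensated by the fact that each asymmetric $P$-orbit already contributes at least two geometrically distinct closed characteristics, so ${}^\#\hat{\mathcal{J}}(\Sigma)=p+K\geq p+2q\geq{}^\#S_1+{}^\#S_2$. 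Your write-up contains none of this machinery (no second-iterate bookkeeping, no Claim-1 synchronization $m_{p+j_1}=2m_{p+q+j_1}$, no use of the $P$-orbit multiplicity of asymmetric characteristics), so as written the proof is incomplete; it would only be correct under the additional, unstated hypothesis that every closed characteristic on $\Sigma$ is $P$-symmetric.
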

\begin{proof}
According to the assumption that the set $\hat{\mathcal{J}}(\Sigma)$ is finite and the fact that $x$ and $Px$ are overlapped or completely separated, we can denote $\hat{\mathcal{J}}(\Sigma)$ by
\begin{eqnarray}
\{[(\tau_1,x_1)],\cdots,[(\tau_p,x_p)]\}\cup\displaystyle\bigcup^q_{i=1}\{[(\tau_{p+i},x_{p+i})],[(\tau_{p+i},Px_{p+i})],\cdots,[(\tau_{p+i},P^{k_i}x_{p+i})]\}.
\label{equ hat(J)=(...)}
\end{eqnarray}
Where $\{[(\tau_j,x_j)]\}^p_{j=1}$ are  geometrically distinct $P$-symmetric closed characteristics and
$$\{[(\tau_{p+i},x_{p+i})],[(\tau_{p+i},Px_{p+i})],\cdots,[(\tau_{p+i},P^{k_i}x_{p+i})]\}^q_{i=1}$$
are distinct sets of $P$-asymmetric ones. Let $K$ be the total number of $P$-asymmetric closed characteristics. Since $k_i\geq 2$ for any $ i=1,\cdots,q$, then we have ${}^\#\hat{\mathcal{J}}(\Sigma)=p+K<+\infty,\  K=k_1+\cdots+k_q\geq2q$.

Let $i^m_j=i_1(\gamma^m_{x_j}),\nu^m_j=\nu_1(\gamma^m_{x_j}), M_j=\gamma_{x_j}(\tau_j), j\in\{1,\cdots,p+q\}$. By (\ref{mean index bigger than zero}), every closed characteristic on $\Sigma$ corresponds to a sympletic path with positive mean index. Applying the Common Index Jump Theorem, i.e. Lemma \ref{common index jump theorem}, to the associated symplectic paths of $$\{(\tau_1,x_1),\cdots,(\tau_{p+q},x_{p+q}),(2\tau_{p+1},x^2_{p+1}),\cdots,(2\tau_{p+q},x^2_{p+q})\},$$ we obtain infinite many $(N,m_1,\cdots,m_{p+2q})\in\mathbb{N}^{p+2q+1}, N\geq n$ such that
\begin{eqnarray}
&i^{2m_j+1}_j=2N+i^1_j,\ i^{2m_j-1}_j+\nu^{2m_j-1}_j=2N-(i^1_j+2S^+_{M_j}(1)-\nu^1_j),\label{1 to p+q,equ 1}\\
&i^{2m_j}_j\geq 2N-n,i^{2m_j}_j+\nu^{2m_j}_j\leq 2N+n,\label{1 to p+q,unequ 2}
\end{eqnarray}
$\forall j\in\{1,\cdots,p+q\},$ and
\begin{eqnarray}
&i^{4m_{p+q+j_1}+2}_{p+j_1}=2N+i^2_{p+j_1},\label{p+q to p+2q,equ 1}\\
&i^{4m_{p+q+j_1}-2}_{p+j_1}+\nu^{4m_{p+q+j_1}-2}_{p+j_1}=2N-(i^2_{p+j_1}+2S^+_{M^2_{p+j_1}}(1)-\nu^2_{p+j_1}),\label{p+q to p+2q,equ 2}\\
&i^{4m_{p+q+j_1}}_{p+j_1}\geq 2N-n,\ i^{4m_{p+q+j_1}}_{p+j_1}+\nu^{4m_{p+q+j_1}}_{p+j_1}\leq 2N+n,\label{p+q to p+2q,unequ 1}
\end{eqnarray}
$\forall j_1\in \{1,\cdots,q\}.$

\textbf{Claim 1:} $m_{p+j_1}=2m_{p+q+j_1}$ for any $ j_1\in\{1,\cdots,q\}$.

In fact, by using (\ref{1 to p+q,unequ 2}), (\ref{i+2S-v with 2nd iteration}), (\ref{p+q to p+2q,equ 2}) and (\ref{index biger than n}), we have
\begin{align*}
i^{2m_{p+j_1}}_{p+j_1}\geq& 2N-n\geq 2N-(i^2_{p+j_1}+2S^+_{M_{p+j_1}}(1)-\nu^2_{p+j_1})\\
=&i^{4m_{p+q+j_1}-2}_{p+j_1}+\nu^{4m_{p+q+j_1}-2}_{p+j_1}> i^{4m_{p+q+j_1}-2}_{p+j_1}.
\end{align*}
Then, by (\ref{index biger than n}), (\ref{1 to p+q,unequ 2}) and (\ref{p+q to p+2q,equ 1}),
\begin{align*}
i^{2m_{p+j_1}}_{p+j_1}<i^{2m_{p+j_1}}_{p+j_1}+\nu^{2m_{p+j_1}}_{p+j_1}\leq& 2N+n\leq2N+i^2_{p+j_1}=i_{p+j_1}^{4m_{p+q+j_1}+2}.
\end{align*}
Finally, by (\ref{index interval}), we get
$$4m_{p+q+j_1}-2<2m_{p+j_1}<4m_{p+q+j_1}+2.\Rightarrow m_{p+j_1}=2m_{p+q+j_1}.$$
The claim follows.

According to Lemma \ref{lem The existance of injection map Psi}, we get an injection map $\Psi: \mathbb{N}\rightarrow\hat{\mathcal{J}}(\Sigma)\times\mathbb{N}$. Let
$$\Psi(N-s+1):=([(\tau_{j(s)},x_{j(s)})],m(s)),\ s\in\{1,\cdots,n\},$$
such that
\begin{equation}\label{<=2N-2s+n<=}
i^{m(s)}_{j(s)}\leq 2N-2s+n\leq i^{m(s)}_{j(s)}+\nu^{m(s)}_{j(s)}-1.
\end{equation}
where $j(s)\in\{1,\cdots,p+q\},m(s)\in\mathbb{N}$.
Then from (\ref{<=2N-2s+n<=}), (\ref{index biger than n}) and (\ref{1 to p+q,equ 1}), we deduce that
\begin{equation}\label{m(s)<2m(j(s))+1}
i^{m(s)}_{j(s)}\leq 2N-2s+n<2N+n\leq2N+i^1_{j(s)}=i^{2m_{j(s)}+1}_{j(s)}.
\end{equation}
Let
\begin{equation}\label{S1,S2}
S_1=\{k\in\{1,\cdots,[\frac{n_1+n}{2}]\},1\leq j(k)\leq p\},\\
S_2=\{k\in\{1,\cdots,n\},p+1\leq j(k)\leq p+q\}.
\end{equation}
\textbf{Claim 2:} ${}^\#S_1\leq p$.

In fact, let $k\in S_1$, then $1\leq j(k)\leq p$. By (\ref{<=2N-2s+n<=}), (\ref{1 to p+q,equ 1}) and the assumption (\ref{equ the estimation}), it follows that
\begin{align*}
i^{m(k)}_{j(k)}+\nu^{m(k)}_{j(k)}-1\geq&\ 2N-2k+n\geq 2N+n-2(\frac{n_1+n}{2})=2N-n_1\\
\geq&\ 2N-(i^1_{j(k)}+2S^+_{M_{j(k)}}(1)-\nu^1_{j(k)})=i^{2m_{j(k)}-1}_{j(k)}+\nu^{2m_{j(k)}-1}_{j(k)}.
\end{align*}
According to (\ref{m(s)<2m(j(s))+1}) and (\ref{index interval}), we conclude that
\begin{eqnarray*}
2m_{j(k)}-1<m(k)<2m_{j(k)}+1\Rightarrow m(k)=2m_{j(k)}.
\end{eqnarray*}
Then $\Psi(N-k+1)=([(\tau_{j(k)},x_{j(k)})],2m_{j(k)})$. Since $\Psi$ is injective, by (\ref{S1,S2}), we have ${}^\#S_1\leq p$.\\
\textbf{Claim 3:} ${}^\# S_2\leq 2q$.

In fact, let $k\in S_2$, then $p+1\leq j(k)\leq p+q$. From (\ref{<=2N-2s+n<=}), (\ref{i+2S-v with 2nd iteration}), (\ref{p+q to p+2q,equ 2}) and Claim 1, we obtain
\begin{align*}
i^{m(k)}_{j(k)}+\nu^{m(k)}_{j(k)}-1\geq&\ 2N-2s+n\geq 2N-n\geq 2n-(i^2_{j(k)}+2S^+_{M^2_{j(k)}}(1)-\nu^2_{j(k)})\\
=&\ j^{4m_{q+j(k)}-2}_{j(k)}+\nu^{4m_{q+j(k)}-2}_{j(k)}=j^{2m_{j(k)}-2}_{j(k)}+\nu^{2m_{j(k)}-2}_{j(k)}.
\end{align*}
By (\ref{m(s)<2m(j(s))+1}) and (\ref{index interval}) again, we have
$$2m_{j(k)}-2<m(k)<2m_{j(k)}+1\Rightarrow m(k)\in\{2m_{j(k)}-1,2m_{j(k)}\}.$$
Since $\Psi$ is injective again, this claim follows.

Finally, combining with Claim 2, Claim 3 and (\ref{S1,S2}), we have
\begin{eqnarray*}
{}^\#\hat{\mathcal{J}}(\Sigma)=p+K\geq p+2q\geq {}^\#S_1+{}^\#S_2\geq[\frac{n_1+n}{2}].
\end{eqnarray*}
\end{proof}
\section{Iteration theory of Maslov index and the proof of the main theorem}
In this section, the symplectic matrix $P$, which $\Sigma$ is symmetric with, will be selected as a special class of symplectic matrix satisfies $P^m=I$ for some $m$. Under this case, we will use Bott-type iteration formulas for ($P,m$)-iteration paths in \cite{Liu.Tang.2015} to estimate the number $n_1$ in Proposition \ref{pro lower bound of close cha}. We provide this estimation in Theorem \ref{thm i+2S-v} below. In the end of this section, we will prove the main results. Note that, this special ($P,m$)-iteration was found by Dong and Long in \cite{Dong.Long.2004}.

First we will show some notations. Let
\begin{eqnarray}
\omega_k:=e^{\sqrt{-1}\theta_k}=e^{\frac{2k\pi\sqrt{-1}}{m}},\ k=0,\cdots, m-1.\label{def of omega_k}
\end{eqnarray}
$\sigma(M)$ denotes the spectrum set of matrix $M$.
Then we define
$$\Omega_m(2n):=\{P\in\mathrm{Sp}(2n)|P^m=I_{2n}\ and\ \sigma(P)=\{\omega_k,\bar{\omega}_k\},k\in\{1,\cdots,[\frac{m}{2}]\}\}, $$
$$\Omega_{m,k}(2n):=\{P\in\Omega_m(2n)|\omega_k\in\sigma(P)\},\ k=1,\cdots,[\frac{m}{2}],$$
$$\tilde{\Omega}^a_m(2n):=\{P\in\Omega_m(2n)|S^+_P(\omega)=S^-_P(\omega)+a,\forall \omega\in \mathbf{U}^+\}$$ and $$\tilde{\Omega}^a_{m,k}(2n):=\{P\in\Omega_{m,k}(2n)|S^+_P(\omega_k)=S^-_P(\omega_k)+a\},\ k=1,\cdots,[\frac{m}{2}],$$
where $a=-1,0,1$ and $\mathbf{U}^+:=\{e^{\sqrt{-1}\theta},\theta\in(0,\pi]\}$. For convenience, we denote by $$\tilde{\Omega}_m(2n)=\tilde{\Omega}^0_m(2n),\ \tilde{\Omega}_{m,k}(2n)=\tilde{\Omega}^0_{m,k}(2n).$$
\begin{rem}\label{remark 0}
Note that $P\in\Omega_m(2n)$, $m\geq 2$ if and only if $P$ is similar to the following matrix
\begin{eqnarray}
R(-\theta)^{\diamond n-S^-_P(\omega)}\diamond R(\theta)^{\diamond S^-_P(\omega)},\label{diagonalized form}
\end{eqnarray}
where $R(\theta)=\begin{pmatrix}\cos\theta& -\sin\theta\\ \sin\theta& \cos\theta\end{pmatrix}$, $e^{im\theta}=1$ and $\frac{\theta}{2\pi}\notin \mathbb{Z}$.

Indeed, $\Leftarrow$ follows directly. We only consider $\Rightarrow$. Denote the minimal polynomial of $P$ by $$f(P)=(P-\omega_k)^p(P-\omega_{m-k})^p,$$
for some positive integers $k\in\{1,\cdots,[\frac{m}{2}]\},p>0$. Since $P^m=I_{2n}$, this polynomial $f(y)$ should exactly divide polynomial $y^m-1$, which means $p$ must be 1. Then we have $P$ is diagonalizable and then (\ref{diagonalized form}) is followed by (\ref{pro splitting number 5}-\ref{pro similarity}).
\end{rem}
According to Remark \ref{remark 0} above, we know that $a=0\ ($resp., $\pm 1)$ implies that $n$ is even(resp., odd).

For any two matrices $M_1=\begin{pmatrix}A_1&A_2\\ A_3&A_4\end{pmatrix}_{2i\times 2i}$ and $M_2=\begin{pmatrix}B_1&B_2\\ B_3&B_4\end{pmatrix}_{2j\times 2j}$, we define the diamond product of them by the $2(i+j)\times 2(i+j)$ matrix
\begin{eqnarray}
M_1\diamond M_2=\begin{pmatrix}A_1&0&A_2&0\\0&B_1&0&B_2\\A_3&0&A_4&0\\0&B_3&0&B_4\end{pmatrix},\label{diamond product}
\end{eqnarray}
and denote by $M^{\diamond k}$ the $k$-fold diamond product $M\diamond \cdots \diamond M$.

According to the properties of $P$-symmetric closed characteristics given by Proposition \ref{properties of symmetric orbit}(4), we define the following ($P,m$)-iteration for symplectic pathes as \cite{Dong.Long.2004} did. Note that this is the definition in \cite{Liu.Tang.2015} if we replace $P$ by $P^{-1}$.
\begin{definition}\label{def of interation path}
Let $\gamma\in\mathcal{P}_{\tau}(2n)$, $P\in\mathrm{Sp}(2n)$. Define the $(P,m)$-iteration of $\gamma$ by
$$\gamma^{m,P}:=P^{-(j-1)}\gamma(t-(j-1)\tau)(P\gamma(\tau))^{j-1},t\in[(j-1)\tau,j\tau],\ \forall j\in\{1,\cdots,m\}.$$
\end{definition}
Then following Bott-type iteration formulas hold.
\begin{pro}\label{bott interation}
For any $P\in\mathrm{Sp}(2n), \gamma\in\mathcal{P}_\tau(2n),\omega_0\in\mathbf{U}$, and $m\in\mathbb{N}$, we have
\begin{equation}\label{interation of index}
\mu(Gr(\omega_0I_{2n}),Gr(P^m\gamma^{m,P}))=\sum_{\omega^m=\omega_0}\mu(Gr(\omega I_{2n}),Gr(P\gamma)),
\end{equation}
\begin{equation}\label{interation of nullity}
\nu_{\omega_0}(\overline{P^m\gamma^{m,P}})=\sum_{\omega^m=\omega_0}\nu_\omega(\overline{P\gamma}),
\end{equation}
\begin{equation}\label{interation of splitting number}
S^\pm_{P^m\gamma^{m,P}(m\tau)}(\omega_0)=\sum_{\omega^m=\omega_0}S^\pm_{P\gamma(\tau)}(\omega),
\end{equation}
where $\overline{\gamma_1}:=\gamma_1\ast\xi$ for any symplectic path $\gamma_1\in C([0,\tau],\mathrm{Sp}(2n))$ not starting from $I_{2n}$, $\xi$ is arbitrary in $\{\xi\in\mathcal{P}_\tau(2n)|\xi(\tau)=\gamma_1(0)\}$ and $\ast$ is defined as (\ref{path connecting}).
\end{pro}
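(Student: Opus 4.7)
The overall plan is to reduce all three identities to the Bott-type iteration formulas for the Maslov-type $\omega$-index established in \cite{Liu.Tang.2015}, using the CLM/$\omega$-index correspondence of Lemma \ref{lem relation between two indies} to pass between Maslov and Maslov-type indices, and a direct linear-algebra identity for the nullity.

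For \eqref{interation of index}, I would fix a symplectic path $\xi\in\mathcal{P}_\tau(2n)$ connecting $I_{2n}$ to $P$, and inductively form a path $\xi_m\in\mathcal{P}_{m\tau}(2n)$ connecting $I_{2n}$ to $P^m$ by an $m$-fold concatenation compatible with Definition \ref{def of interation path}. Applying \eqref{relation between two indies 2} once on each side gives
\begin{equation*}
\mu(Gr(\omega_0 I_{2n}),Gr(P^m\gamma^{m,P}))=i_{\omega_0}(P^m\gamma^{m,P}\ast\xi_m)-i_{\omega_0}(\xi_m),
\end{equation*}
and, for each $m$th root $\omega$ of $\omega_0$,
\begin{equation*}
\mu(Gr(\omega I_{2n}),Gr(P\gamma))=i_{\omega}(P\gamma\ast\xi)-i_{\omega}(\xi).
\end{equation*}
A direct unwinding of Definition \ref{def of interation path} shows that $P^m\gamma^{m,P}\ast\xi_m$ is precisely the $m$-fold $(P,m)$-iteration of $P\gamma\ast\xi$ in the sense used in \cite{Liu.Tang.2015} (up to the $P\leftrightarrow P^{-1}$ convention already flagged in the paper). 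Theorem~1.1 of \cite{Liu.Tang.2015} then yields the Bott-type splitting of both $i_{\omega_0}$ terms above into sums over $\omega^m=\omega_0$, and subtracting the two formulas gives \eqref{interation of index}.

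For \eqref{interation of nullity}, a direct telescoping from Definition \ref{def of interation path} gives the endpoint identity
\begin{equation*}
P^m\gamma^{m,P}(m\tau)=P^m\cdot P^{-(m-1)}\gamma(\tau)(P\gamma(\tau))^{m-1}=(P\gamma(\tau))^m,
\end{equation*}
so the claim reduces to the standard linear-algebra fact that $\dim_{\mathbb{C}}\ker(A^m-\omega_0 I_{2n})=\sum_{\omega^m=\omega_0}\dim_{\mathbb{C}}\ker(A-\omega I_{2n})$ applied to $A=P\gamma(\tau)$, which follows from the generalized-eigenspace decomposition of $A$. For \eqref{interation of splitting number}, I would invoke the definition \eqref{equ initial def of splitting number} of $S^\pm$ as one-sided limits of $\omega$-index differences, rewrite these via Lemma \ref{lem relation between two indies} as differences of Maslov indices, substitute \eqref{interation of index} at $e^{\sqrt{-1}\epsilon}\omega_0$ and at $\omega_0$, and then pass to the limit $\epsilon\to\pm 0$ term by term inside the finite sum over $\omega^m=\omega_0$. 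Each summand then produces exactly $S^\pm_{P\gamma(\tau)}(\omega)$, giving the claim.

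The main obstacle is the first step: verifying in detail that the concatenated path $P^m\gamma^{m,P}\ast\xi_m$ is exactly the iterated object for which Theorem~1.1 of \cite{Liu.Tang.2015} is formulated, and that the $P\leftrightarrow P^{-1}$ convention is correctly absorbed when one applies that result simultaneously to $P\gamma\ast\xi$ and to $\xi$. Once this bookkeeping is pinned down, \eqref{interation of nullity} is purely algebraic and \eqref{interation of splitting number} is a direct consequence of \eqref{interation of index} and the definition of splitting numbers.
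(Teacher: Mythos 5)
Your overall reduction --- rewrite the Maslov (CLM) index as a Maslov $(P,\omega)$-index via Lemma \ref{lem relation between two indies}, then invoke Theorem~1.1 of \cite{Liu.Tang.2015}, and handle \eqref{interation of nullity} by a linear-algebra fact about $\ker(A^m-\omega_0 I)$ --- is the same as the paper's. Two points are worth flagging. First, the intermediate claim that $P^m\gamma^{m,P}\ast\xi_m$ ``is precisely the $m$-fold $(P,m)$-iteration of $P\gamma\ast\xi$'' is not literally correct and is also not needed: the $(P,m)$-iteration of a path on $[0,2\tau]$ interleaves copies of that path with conjugations, whereas $P^m\gamma^{m,P}\ast\xi_m$ puts all of $\xi_m$ first and all of $P^m\gamma^{m,P}$ second, so these are not the same object (and Theorem~1.1 of \cite{Liu.Tang.2015} is not a statement about splitting $i_\omega$ of arbitrary concatenations). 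The correct route, which the paper takes, is to note that by \eqref{relation between two indies 2} both sides of \eqref{interation of index} \emph{are already} $(P,\omega)$-indices in the sense of Definition~2.7 of \cite{Liu.Tang.2015} --- the choice of $\xi$ is built into that definition and cancels automatically --- and then the identity $i^{P^m}_{\omega_0}(\gamma^m)=\sum_{\omega^m=\omega_0}i^P_\omega(\gamma)$ from Theorem~1.1 of \cite{Liu.Tang.2015} gives \eqref{interation of index} in one step, modulo the $P\leftrightarrow P^{-1}$ substitution. Second, for \eqref{interation of splitting number} you derive it from \eqref{interation of index} and the limit definition \eqref{equ initial def of splitting number}; this is a valid alternative, but the paper instead cites the iteration formula for splitting numbers, Theorem~9.2.4 in \cite{Y.Long.2002}, which is more direct since splitting numbers depend only on the endpoint $\gamma^{m,P}(m\tau)=P^{-m}(P\gamma(\tau))^m$. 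Neither deviation affects correctness, but the concatenation-versus-iteration step as you phrased it would not survive a careful check and should be replaced by the direct invocation of the $(P,\omega)$-index Bott formula.
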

\begin{proof}
In view of Proposition \ref{properties of symmetric orbit}(4), we have $\gamma^{m,P}(m\tau)=P^{-m}(P\gamma(\tau))^m$.
According to the relation (\ref{relation between two indies 2}) and Definition 2.7 of Maslov $(P,\omega)$-index $i^{P}_\omega(\gamma)$ in \cite{Liu.Tang.2015}, we have
$$\mu(Gr(\omega I_{2n}),Gr(P\gamma))=i_\omega(\overline{P\gamma})-i_\omega(\xi)=i^{P^{-1}}_\omega(\gamma),\ \overline{P\gamma}=P\gamma\ast\xi.$$
The iteration path $\gamma^m$ defined in (5.5) of \cite{Liu.Tang.2015} is exactly the iteration path $\gamma^{m,P^{-1}}$ defined above. It follows from Theorem 1.1 in \cite{Liu.Tang.2015} that
\begin{eqnarray*}
\mu(Gr(\omega_0I_{2n}),Gr(P^{-m}\gamma^{m,P^{-1}}))&=&\mu(Gr(\omega_0I_{2n}),Gr(P^{-m}\gamma^m))=i^{P^m}_{\omega_0}(\gamma^m)=\sum_{\omega^m=\omega_0}i^P_\omega(\gamma)\\
&=&\sum_{\omega^m=\omega_0}\mu(\omega I_{2n},Gr(P^{-1}\gamma)),
\end{eqnarray*}
after replacing $P^{-1}$ by $P$, (\ref{interation of index}) holds. By the definition of $\nu_\omega(\gamma)$ and Theorem 9.2.1(2) in \cite{Y.Long.2002},
\begin{eqnarray*}
\nu_{\omega_0}(\overline{P^m\gamma^{m,P}})&=&\nu_{\omega_0}(P^m\gamma^{m,P}(m\tau))=\nu_{\omega_0}((P\gamma(\tau))^m)
=\sum_{\omega^m=\omega_0}\nu_{\omega}(P\gamma(\tau))\\
&=&\sum_{\omega^m=\omega_0}\nu_\omega(\overline{P\gamma}),
\end{eqnarray*}
then (\ref{interation of nullity}) follows. For the last one, by using Theorem 9.2.4 in \cite{Y.Long.2002}, we have
\begin{eqnarray*}
S^\pm_{P^m\gamma^{m,P}(\tau)}(\omega_0)=S^\pm_{(P\gamma(\tau))^m}(\omega_0)
=\sum_{\omega^m=\omega_0}S^\pm_{P\gamma(\tau)}(\omega_0),
\end{eqnarray*}
thus (\ref{interation of splitting number}) follows directly.
\end{proof}
Note that, if $\omega_0=1$, $P$ is orthogonal and satisfies $P^m=I_{2n}$, then (\ref{interation of index}) will reduce to the Theorem 1.1 in \cite{Hu.Sun.2009}. Next, in order to calculate the Maslov index of symplectic paths which does not start from $I_{2n}$, we need to prove the following Lemma. Before this, we introduce the definition of the crossing form in \cite{Rabbin.Salamon.1993}.

Let $V,\ \Lambda(t)\in\mathrm{Lag}(n),\ t\in[a,b]$ be a Lagrangian and a Lagrangian path, respectively. For each pair $(V,\Lambda)$, we define the crossing form as
\begin{eqnarray}
\Gamma(\Lambda(t),V,t):=Q(\Lambda(t),\dot\Lambda(t))|_{\Lambda(t)\cap V},\ \forall t\in[a,b],\label{crossing form}
\end{eqnarray}
where, for some Lagrangian $W$, $$(Q(\Lambda(t_0),\dot{\Lambda}(t_0))u,v):=\frac{d}{dt}|_{t_0}\omega_{st}(v,w(t)),\ \forall v\in \Lambda(t_0),\ w(t)\in W,\ v+w(t)\in\Lambda(t),$$ which does not depend on the choice of $W$. Like Maslov index, the crossing form is also invariant under the action of $\mathrm{Sp}(2n)$, i.e. $\Gamma(\Phi\Lambda,\Psi V,t)=\Gamma(\Lambda,V,t),\ \forall\Phi\in\mathrm{Sp}(2n)$. If the Lagrangian path $\Lambda$ has only regular crossings which means the crossing form $\Gamma(\Lambda(t),V,t)$ is nondegenerate when $\Lambda(t)\cap V\neq\{0\}$, then \cite{Long.zhu.2000} provides the following relation
\begin{eqnarray}
\mu(V,\Lambda)=m^+(\Gamma(\Lambda(a)),V,a)+\sum_{0<t<1}\mathrm{Sign}(\Gamma(\Lambda(t),V,t))-m^-(\Gamma(\Lambda(b),V,b))),\label{index formula}
\end{eqnarray}
where $m^\pm(M)$ denote the dimensions of positive and negative definite subspace of matrix $M$, respectively. $\mathrm{Sign}(M)=m^+(M)-m^-(M)$ denotes the signature of $M$. Note that any Lagrangian path can be regularized by a small perturbation.

Now we show the crossing form in a special case.
\begin{pro}\label{crossing form formula}
Let $\gamma\in C([0,\tau],\mathrm{Sp}(2n)).$ If $\gamma(t_0)\in\mathrm{Sp}^0_\omega(2n)$ and $-J\dot{\gamma}(t_0)\gamma(t_0)$ is positive definite for some $t_0\in[0,\tau]$, then $\Gamma(Gr(\gamma(t_0)),Gr(\omega I_{2n}),t_0)$ is positive definite and
$$\mathrm{Sign}\ \Gamma(Gr(\gamma(t_0)),Gr(\omega I_{2n}),t_0)=\nu_\omega(\gamma(t_0)).$$
\end{pro}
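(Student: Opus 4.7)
The plan is to unfold the crossing form $\Gamma:=\Gamma(Gr(\gamma(t_0)),Gr(\omega I_{2n}),t_0)$ directly from its defining formula (\ref{crossing form}) for the Lagrangian path $\Lambda(t):=Gr(\gamma(t))$ against the fixed Lagrangian $V:=Gr(\omega I_{2n})$, reduce it to a quadratic form on the eigenspace $E_\omega:=\ker(\gamma(t_0)-\omega I_{2n})$, and then read off positive-definiteness and the signature identity from the hypothesis on $-J\dot\gamma(t_0)\gamma(t_0)$.

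The first step is to identify the crossing. In the ambient symplectic space $(\mathbb{R}^{4n},\omega_{st}\oplus(-\omega_{st}))$ a vector $(a,b)$ lies in $Gr(\gamma(t_0))\cap Gr(\omega I_{2n})$ if and only if $b=\gamma(t_0)a=\omega a$; hence $\Lambda(t_0)\cap V$ is canonically isomorphic to $E_\omega$ via $x\mapsto u:=(x,\omega x)$, with dimension $\nu_\omega(\gamma(t_0))$. Once positive-definiteness of $\Gamma$ on this space is established, the signature identity $\mathrm{Sign}\,\Gamma=\nu_\omega(\gamma(t_0))$ is automatic. For the actual computation I would take the natural Lagrangian extension $u(t):=(x,\gamma(t)x)\in\Lambda(t)$ of $u$. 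Since $u$ is isotropic in $\tilde\omega:=\omega_{st}\oplus(-\omega_{st})$, the crossing form collapses to $\Gamma(u,u)=\tilde\omega(u,\dot u(t_0))$; substituting $\dot u(t_0)=(0,\dot\gamma(t_0)x)$ and pulling out the scalar $\omega$ from the second coordinate gives
$$\Gamma(u,u)=-\omega\,\omega_{st}(x,\dot\gamma(t_0)x).$$
Now using the identity $\omega_{st}(a,b)=-a^TJb$ (the paper's $J$-convention) together with the eigen-equation $\gamma(t_0)x=\omega x$ to absorb the factor $\omega$ back onto $x$, the right-hand side becomes a sign-fixed scalar multiple of $\langle x,(-J\dot\gamma(t_0)\gamma(t_0))x\rangle$, whose positivity on $\mathbb{R}^{2n}$ is precisely the hypothesis. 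Restriction to $E_\omega$ then yields the claimed definiteness of $\Gamma$ on $\Lambda(t_0)\cap V$, and the signature equality follows.

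The main obstacle is the case $\omega\in\mathbf{U}\setminus\{\pm1\}$. Then $E_\omega$ is a genuinely complex subspace of $\mathbb{C}^{2n}$, $Gr(\omega I_{2n})$ must be interpreted via complexification, and the crossing form is Hermitian rather than real symmetric. The delicate algebraic point is to verify that under the sesquilinear extension of $\omega_{st}\oplus(-\omega_{st})$ to $\mathbb{C}^{4n}$ the unimodular scalar $\omega$ pairs with its conjugate $\bar\omega$, so that $\Gamma(u,u)$ is real and equal to $x^{\ast}(-J\dot\gamma(t_0)\gamma(t_0))x$; once this identification is in place, the positive-definiteness of the real symmetric matrix $-J\dot\gamma(t_0)\gamma(t_0)$ upgrades directly to positivity of the Hermitian form on $E_\omega$, and hence of $\Gamma$, finishing the proof.
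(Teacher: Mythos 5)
Your strategy matches the paper's: identify $\Lambda(t_0)\cap V$ with the eigenspace $E_\omega$, unfold the crossing form as $\Gamma(u)=\tilde\omega(u,\dot u(t_0))$ along a curve $u(t)\in\Lambda(t)$, and deduce the signature identity from positive-definiteness. But the phrase ``sign-fixed scalar multiple'' hides a real gap, and with the conventions you actually adopt (namely $Gr(\gamma(t))=\{(x,\gamma(t)x)\}$ as in the paper's own definition of $Gr$, and $\tilde\omega=\omega_{st}\oplus(-\omega_{st})$) the sign comes out wrong. Completing your arithmetic with $\omega_{st}(a,b)=-a^{T}Jb$ and $\gamma(t_0)x=\omega x$ gives
\[
\Gamma(u)=-\omega\,\omega_{st}(x,\dot\gamma(t_0)x)=\omega\,x^{T}J\dot\gamma(t_0)x=x^{T}\gamma(t_0)^{T}J\dot\gamma(t_0)x=x^{T}J\gamma(t_0)^{-1}\dot\gamma(t_0)x,
\]
which is \emph{negative} whenever $-J\dot\gamma(t_0)\gamma(t_0)^{-1}>0$ (this is the intended hypothesis; the statement's ``$\gamma(t_0)$'' without the inverse is a typo, and the paper's own proof works with $\gamma(t_0)^{-1}$). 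A concrete check: $\gamma(t)=e^{Jt}$, $t_0=0$, $\omega=1$ yields $\Gamma(u)=x^{T}J^{2}x=-|x|^{2}<0$, while the proposition asserts $>0$.

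The source of the discrepancy is an inconsistency in the paper itself, which you inherit on one side: in the proof of this proposition the paper uses the frame $Z(M)=\begin{pmatrix}M\\I_{2n}\end{pmatrix}$, whose image is $\{(Mu,u)\}$, not $\{(x,Mx)\}$ as declared in the remark where $Gr$ is introduced. With that frame the $t$-dependent slot is the first factor, carrying $+\omega_{st}$, and one obtains $\Gamma(u)=\omega_{st}(\omega u,\dot\gamma(t_0)u)=-u^{T}\gamma(t_0)^{T}J\dot\gamma(t_0)u>0$, which is the sign needed for Lemma 4.5 and for consistency with $\mu(Gr(I),Gr(\gamma))=i_1(\gamma)+n$. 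To repair your proof, either parametrize the graph by $\{(Mu,u)\}$ (equivalently replace the ambient form by $(-\omega_{st})\oplus\omega_{st}$) or reverse the orientation convention in the crossing form, and then verify definiteness explicitly rather than invoking an unchecked scalar multiple. Your remark on the Hermitian extension for $\omega\neq\pm1$ is a valid concern that the paper glosses over; however, with the standard sesquilinear extension (conjugate-linear in the first slot) one gets $\Gamma(u)=\bar\omega\,x^{*}J\dot\gamma(t_0)x=x^{*}J\gamma(t_0)^{-1}\dot\gamma(t_0)x$, which is still negative, so the complex case does not rescue the sign on its own.
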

\begin{proof}
First we know $Gr(\omega I_{2n}),\ Gr(\gamma(t_0))$ are Lagrangian in $(\mathbb{R}^{4n},\omega_{st}\oplus(-\omega_{st}))$. Let $E_\omega(\gamma(t_0))\neq \{0\}$ be the eigenvector space of $\omega$. Since $\gamma(t_0)\in\mathrm{Sp}^0_\omega(2n)$, then $E_\omega(\gamma(t_0))\cong Gr(\omega I_{2n})\cap Gr(\gamma(t_0))\neq\{0\}$. Let $Z(M)=\begin{pmatrix}M\\I_{2n}\end{pmatrix}:\mathbb{R}^{2n}\rightarrow\mathbb{R}^{4n}$ be the Lagrangian frame of $M\in\mathrm{Sp}(2n)$ in $(\mathbb{R}^{4n},\omega_{st}\oplus(-\omega_{st}))$ whose image is $Gr(M)$. Then for any $u\in E_\omega(\gamma(t_0))$ we have
\begin{eqnarray*}
(\Gamma(Gr(\gamma(t_0)))u,Gr(\omega I_{2n}),t_0)
&=&\frac{d}{dt}|_{t_0}(\omega_{st}\oplus(-\omega_{st}))(Z(\omega I_{2n})u,Z(\gamma(t_0))u)\\
&=&\frac{d}{dt}|_{t_0}(\begin{pmatrix}J&0\\0&-J\end{pmatrix}\begin{pmatrix}\omega I_{2n}\\I_{2n}\end{pmatrix}u,\begin{pmatrix}\gamma(t)\\I_{2n}\end{pmatrix}u)\\
&=&\frac{d}{dt}|_{t_0}(J(\omega I_{2n})u,\gamma(t)u)\\
&=&(J\gamma(t_0)u,\dot{\gamma}(t_0)u)\\
&=&(-J\dot{\gamma}(t_0)\gamma(t_0)^{-1}u,u)>0.
\end{eqnarray*}
Then $\Gamma(Gr(\gamma(t_0)),Gr(\omega I_{2n}),t_0)$ is positive definite and the signature is exactly $\dim(E_\omega(\gamma(t_0)))$. Therefore this proposition follows.
\end{proof}
\begin{lem}\label{lem the calc formula of (P,omega)-index}
Let $P\in\mathrm{Sp}(2n)$ and let symmetric matrix path $B|_{[0,\tau]}$ be positive definite. $\gamma\in\mathcal{P}_\tau(2n)$ denotes the fundamental solution of $\dot{y}(t)=JB(t)y(t)$, then
$$\mu(Gr(\omega I_{2n}),Gr(P\gamma))=\nu_\omega(P)+\displaystyle\sum_{0<t<\tau}\nu_{\omega}(P\gamma(t)).$$
\end{lem}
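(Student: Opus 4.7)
The plan is to identify $\mu(Gr(\omega I_{2n}),Gr(P\gamma))$ with a count of intersections, using the crossing form formula (\ref{index formula}) together with Proposition \ref{crossing form formula}. Set $\eta(t):=P\gamma(t)$, viewed as a continuous symplectic path on $[0,\tau]$ (not starting at $I_{2n}$, but Proposition \ref{crossing form formula} does not require this). The Lagrangian path appearing in the Maslov index is $t\mapsto Gr(\eta(t))$, and crossings with the Maslov cycle $\Sigma(Gr(\omega I_{2n}))$ occur precisely at the times $t\in[0,\tau]$ for which $\nu_\omega(P\gamma(t))>0$.

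The key computation is to check the positivity hypothesis of Proposition \ref{crossing form formula} for $\eta$. From $\dot\gamma=JB\gamma$ one gets $\dot\eta=PJB(t)\gamma=PJB(t)P^{-1}\eta$, hence
$$-J\dot\eta(t)\eta(t)^{-1}=-JPJB(t)P^{-1}.$$
Using the symplectic identity $P^TJP=J$, equivalently $JP=(P^{-1})^TJ$, one obtains $-JPJ=(P^{-1})^T$, and therefore
$$-J\dot\eta(t)\eta(t)^{-1}=(P^{-1})^TB(t)P^{-1},$$
which is positive definite for every $t$ because $B(t)$ is positive definite and $P^{-1}$ is invertible. By Proposition \ref{crossing form formula}, at every crossing time $t\in[0,\tau]$ the crossing form $\Gamma(Gr(\eta(t)),Gr(\omega I_{2n}),t)$ is positive definite with
$$\mathrm{Sign}\,\Gamma(Gr(\eta(t)),Gr(\omega I_{2n}),t)=\nu_\omega(P\gamma(t)).$$

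Because every crossing form is positive definite, each crossing is regular; positive definiteness also forces crossings to be isolated, so by compactness of $[0,\tau]$ there are only finitely many. Applying the crossing formula (\ref{index formula}) with $a=0$, $b=\tau$, $V=Gr(\omega I_{2n})$, $\Lambda=Gr(\eta)$ gives
$$\mu(Gr(\omega I_{2n}),Gr(P\gamma))=m^+(\Gamma_0)+\sum_{0<t<\tau}\mathrm{Sign}(\Gamma_t)-m^-(\Gamma_\tau).$$
At $t=0$ the form $\Gamma_0$ lives on $Gr(P)\cap Gr(\omega I_{2n})$, a space of dimension $\nu_\omega(P)$, and is positive definite, so $m^+(\Gamma_0)=\nu_\omega(P)$; at $t=\tau$ the form $\Gamma_\tau$ is again positive definite, so $m^-(\Gamma_\tau)=0$; the interior terms contribute $\nu_\omega(P\gamma(t))$. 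Summing yields the desired identity.

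The only real subtlety is the symplectic manipulation in the first paragraph, which turns the obvious positivity of $B(t)$ into the positivity of $-J\dot\eta\eta^{-1}$; this is exactly where the hypothesis $P\in\mathrm{Sp}(2n)$ enters and is the step I would double-check most carefully. Everything else (finiteness of crossings, applicability of the signature formula at the endpoints, and identification of the signature with the nullity) is an immediate consequence of positive definiteness.
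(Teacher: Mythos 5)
Your proposal is correct and follows essentially the same route as the paper's proof: compute $-J\frac{d}{dt}(P\gamma)\cdot(P\gamma)^{-1}=(P^{-1})^TB(t)P^{-1}$ using $-JPJ=(P^{-1})^T$, invoke Proposition \ref{crossing form formula} to get positive definiteness (hence regularity) of every crossing form and to identify its signature with $\nu_\omega(P\gamma(t))$, and then apply the crossing formula (\ref{index formula}). You make explicit a few points the paper leaves implicit (isolatedness and finiteness of the crossings, and the endpoint evaluations $m^+(\Gamma_0)=\nu_\omega(P)$ and $m^-(\Gamma_\tau)=0$), but the underlying argument is the same.
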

\begin{proof}
First we get
$$B_p(t)=-JP\dot{\gamma}(t)(P\gamma(t))^{-1}=(P^{-1})^TB(t)P^{-1},\ t\in[0,\tau],$$ which is positive definite. By Proposition \ref{crossing form formula}, it further implies that the crossing form is always nondegenerate, i.e. $P\gamma$ is regular. Then, by Lemma \ref{crossing form formula} and (\ref{index formula}), we have
\small
\begin{eqnarray*}
&&\mu(Gr(\omega I_{2n}),Gr(P\gamma))\\
&=&m^+\Gamma(Gr(P),Gr(\omega I_{2n}),0)+\sum_{0<t<\tau}\mathrm{Sign}\ \Gamma(Gr(P\gamma(t)),Gr(\omega I_{2n}),t)-m^-\Gamma(Gr(P\gamma(\tau)),Gr(\omega I_{2n}), \tau)\\
&=&\nu_\omega(P)+\sum_{0<t<\tau}\nu_\omega(P\gamma(t)).
\end{eqnarray*}
\normalsize
This lemma follows.
\end{proof}
Then we have following estimation results.
\begin{thm}\label{thm i+2S-v}
Let $P\in\Omega_m(2n)$ with prime number $m\geq2$. $(\tau,x)\in\mathcal{J}(\Sigma)$ is a prime closed characteristic and $\gamma_x$ is the associated symplectic path of $(\tau,x)$. Assume that $$x(t)=Px(t+\frac{\tau}{m}),\forall t\in[0,\tau],$$ then
\begin{eqnarray}
\mu(Gr(I_{2n}),Gr(\gamma_x))+2S^+_{\gamma_x(\tau)}(1)-\nu(\gamma_x)\geq 2n-S^-_P(\omega),\ \omega\in \sigma(P)\cap \mathbf{U}^+.\label{i+2S-v inequality 3}
\end{eqnarray}
Especially, if $P\in\tilde{\Omega}_m(2n)$, then
\begin{eqnarray}
\mu(Gr(I_{2n}),Gr(\gamma_x))+2S^+_{\gamma_x(\tau)}(1)-\nu(\gamma_x)\geq \frac{3n}{2},\label{i+2S-v inequality 1}
\end{eqnarray}
else if $P\in\tilde{\Omega}^1_m(2n)(resp.,\ \tilde{\Omega}^{-1}_m(2n))$, then
\begin{eqnarray}
\mu(Gr(I_{2n}),Gr(\gamma_x))+2S^+_{\gamma_x(\tau)}(1)-\nu(\gamma_x)\geq [\frac{3n}{2}]+1(resp.,\ [\frac{3n}{2}]).\label{i+2S-v inequality 4}
\end{eqnarray}
\end{thm}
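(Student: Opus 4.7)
The approach is to apply the Bott-type iteration formula of Proposition \ref{bott interation} to decompose the left-hand side of (\ref{i+2S-v inequality 3}) into a sum over $m$-th roots of unity, and then to combine the convexity of $H_\alpha$ with Lemma \ref{lem the calc formula of (P,omega)-index} to bound each summand from below. By Proposition \ref{properties of symmetric orbit}(4) and $P^m = I_{2n}$, the hypothesis $x(t) = Px(t + \tau/m)$ identifies $\gamma_x$ with the $(P,m)$-iteration $\tilde\gamma^{m,P}$ of $\tilde\gamma := \gamma_x|_{[0, \tau/m]}$, and setting $M := P\tilde\gamma(\tau/m)$ gives $\gamma_x(\tau) = M^m$. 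Proposition \ref{bott interation} then yields
\begin{align*}
\mu(Gr(I_{2n}), Gr(\gamma_x)) &= \sum_{\omega^m = 1}\mu(Gr(\omega I_{2n}), Gr(P\tilde\gamma)),\\
\nu(\gamma_x) &= \sum_{\omega^m = 1}\nu_\omega(M), \qquad S^+_{\gamma_x(\tau)}(1) = \sum_{\omega^m = 1}S^+_M(\omega).
\end{align*}

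Since $\alpha \in (1, 2)$ and $\Sigma$ is convex, $H''_\alpha(x(t))$ is positive definite along $x$, so Lemma \ref{lem the calc formula of (P,omega)-index} applies and gives
$$\mu(Gr(\omega I_{2n}), Gr(P\tilde\gamma)) = \nu_\omega(P) + \sum_{0 < t < \tau/m}\nu_\omega(P\tilde\gamma(t)) \geq \nu_\omega(P)$$
for every $\omega \in \mathbf{U}$. By Remark \ref{remark 0}, $P$ is diagonalizable with all eigenvalues among the $m$-th roots of unity, hence $\sum_{\omega^m = 1}\nu_\omega(P) = 2n$, which combined with the previous estimate yields the baseline bound $\mu(Gr(I_{2n}), Gr(\gamma_x)) \geq 2n$.

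To upgrade this to (\ref{i+2S-v inequality 3}), it remains to show $\sum_{\omega^m = 1}[2S^+_M(\omega) - \nu_\omega(M)] \geq -S^-_P(\omega)$ for each $\omega \in \sigma(P) \cap \mathbf{U}^+$. This is the main obstacle. My plan is to sharpen the lower bound of Lemma \ref{lem the calc formula of (P,omega)-index} at the two eigenvalues $\omega_0, \bar\omega_0$ of $P$ by applying the lemma at the perturbed angles $\omega_0 e^{\pm i \epsilon}$ (where $\nu_{\omega_0 e^{\pm i\epsilon}}(P) = 0$ for small $\epsilon>0$) and then transforming the result via the jump identity (\ref{pro splitting number 15}); this produces additional lower bounds
$$\mu(Gr(\omega_0 I_{2n}), Gr(P\tilde\gamma)) \geq \max\bigl\{\nu_{\omega_0}(P),\ S^+_P(\omega_0) - S^+_M(\omega_0),\ S^-_P(\omega_0) - S^-_M(\omega_0)\bigr\},$$
and symmetrically at $\bar\omega_0$. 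Combining these refined bounds with the identity $S^+_P(\omega_0) + S^-_P(\omega_0) = n$ from Remark \ref{remark 2.1} and the standard inequalities (\ref{pro 0<v-Spm(omega)<P,Q}) relating splitting and Krein numbers should deliver (\ref{i+2S-v inequality 3}). The finer inequalities (\ref{i+2S-v inequality 1}) and (\ref{i+2S-v inequality 4}) then follow immediately by specializing $S^-_P(\omega)$ to $n/2$, $(n-1)/2$, or $(n+1)/2$ according to whether $P$ belongs to $\tilde\Omega_m(2n)$, $\tilde\Omega_m^1(2n)$, or $\tilde\Omega_m^{-1}(2n)$.
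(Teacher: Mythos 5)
The beginning of your plan (the Bott-type decomposition via Proposition \ref{bott interation}, the identification $\gamma_x(\tau)=(PM)^m$, and the baseline inequality $\mu(Gr(\omega_iI_{2n}),Gr(P\tilde\gamma))\geq\nu_{\omega_i}(P)$ from Lemma \ref{lem the calc formula of (P,omega)-index}) agrees with the paper. The gap is in the ``upgrade'' step. Your proposed refinement of the lower bound at an eigenvalue $\omega_0=\omega_k$ of $P$ is in fact not a refinement at all. The local jump identity (\ref{pro splitting number 15}) applied across $\omega_0e^{\pm i\epsilon}$ gives, exactly as you say,
$$\mu(Gr(\omega_0I_{2n}),Gr(P\tilde\gamma))\ \geq\ S^{+}_{P}(\omega_0)-S^{+}_{PM}(\omega_0)\quad\text{and}\quad \mu(Gr(\omega_0I_{2n}),Gr(P\tilde\gamma))\ \geq\ S^{-}_{P}(\omega_0)-S^{-}_{PM}(\omega_0),$$
but since $S^{\pm}_{PM}(\omega_0)\geq 0$ and $S^{+}_P(\omega_0),S^{-}_P(\omega_0)\leq S^{+}_P(\omega_0)+S^{-}_P(\omega_0)=\nu_{\omega_0}(P)=n$, both of these are $\leq n=\nu_{\omega_0}(P)$. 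Hence your claimed $\max\bigl\{\nu_{\omega_0}(P),\,S^+_P(\omega_0)-S^+_M(\omega_0),\,S^-_P(\omega_0)-S^-_M(\omega_0)\bigr\}$ always equals $n$, and the bound you use at $\omega_k$ and $\omega_{m-k}$ reduces to $\mu(Gr(\omega_kI_{2n}),\cdot)+\mu(Gr(\omega_{m-k}I_{2n}),\cdot)\geq 2n$. Combined with the only estimate available for the splitting/nullity part, namely $2S^+_{\gamma_x(\tau)}(1)-\nu(\gamma_x)\geq -\sum_i P_{\omega_i}(PM)\geq -n$, this yields at best $\geq n$, which is strictly weaker than the target $2n-S^-_P(\omega_k)$ whenever $S^-_P(\omega_k)<n$ (e.g.\ the case $P\in\tilde\Omega_m(2n)$, where $S^-_P(\omega_k)=n/2$).

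The missing idea, which is what the paper actually does for $m>2$, is to retain a \emph{third} $\mu$-term in the Bott decomposition, namely $\mu(Gr(\omega_{k-1}I_{2n}),Gr(P\tilde\gamma))$, and use the finite-interval jump formula (\ref{pro splitting number 15}) between $\omega_{k-1}$ and $\omega_k$ (not an infinitesimal perturbation) to write
$$\mu(Gr(\omega_{k-1}I_{2n}),\cdot)+\mu(Gr(\omega_kI_{2n}),\cdot)=2\mu(Gr(\omega_kI_{2n}),\cdot)-\Bigl(\sum_{\theta_{k-1}\le\theta<\theta_k}S^+_{PM}(e^{i\theta})-\sum_{\theta_{k-1}<\theta\le\theta_k}S^-_{PM}(e^{i\theta})+S^-_P(\omega_k)\Bigr).$$
Bounding $\mu(Gr(\omega_kI_{2n}),\cdot)$ and $\mu(Gr(\omega_{m-k}I_{2n}),\cdot)$ by $n$ each then produces $3n-S^-_P(\omega_k)$ minus splitting terms $S^\pm_{PM}$ supported on the interval $[\theta_{k-1},\theta_k]$; these are precisely the terms that cancel against $\sum_iS^+_{PM}(\omega_i)$ coming from $2S^+_{(PM)^m}(1)$, so that the total Krein-number loss (via (\ref{pro 0<v-Spm(omega)<P,Q}) and (\ref{pro splitting number 13})) is only $n$, giving $3n-S^-_P(\omega_k)-n=2n-S^-_P(\omega_k)$. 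Without this third $\mu$-term, you never reach $3n$ before subtracting the Krein-number deficit, and the estimate (\ref{i+2S-v inequality 3}) cannot be attained from your bounds alone. (The case $m=2$ must also be treated separately, since then $\omega_{k-1}=1$ and $\omega_k=\omega_{m-k}=-1$ collapse.)
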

\begin{proof}
Denote by $\omega:=e^{\sqrt{-1}\theta}$ and $\omega_k$ as (\ref{def of omega_k}). Without loss of generality, we assume $P\in\Omega_{m,k}(2n)$, i.e. $\{\omega_k,\omega_{m-k}\}=\sigma(P)$ where $k\in\{1,\cdots,[\frac{m}{2}]\}$. Let $M=\gamma(\frac{\tau}{m})$, $\hat{\gamma}_x(t)=\gamma_{x}(t),\ \forall t\in[0,\frac{\tau}{m}]$. According to Proposition \ref{properties of symmetric orbit} and Definition \ref{def of interation path}, we have $\gamma_x=\hat{\gamma}^{m,P}_x$ with end point $\gamma_x(\tau)=(PM)^m$.

(1) When $m>2$, we know that $k-1,k,m-k$ are all different. In view of Proposition \ref{bott interation},\ Remark \ref{remark 2.1}, (\ref{pro splitting number 1},\ref{pro Splitting number 8},\ref{pro 0<v-Spm(omega)<P,Q},\ref{pro splitting number 13},\ref{pro splitting number 15}), Lemma \ref{lem the calc formula of (P,omega)-index} and the convexity of $\Sigma$, we have following facts
\begin{eqnarray*}
&(\ref{pro splitting number 1}),(\ref{pro Splitting number 8}),(\ref{pro 0<v-Spm(omega)<P,Q})\Rightarrow 0\leq S^\pm_N(\omega)\leq\nu_\omega(N)\leq P_\omega(N)+S^-_N(\omega)\ \&\ Q_\omega(N)+S^+_N(\omega),\\
&S^+_N(\omega)\leq P_\omega(N),\ S^-_N(\omega)\leq Q_\omega(N),\forall N\in\mathrm{Sp}(2n).\\
&P\in\Omega_{m,k}(2n),Remark\ \ref{remark 2.1},Remark\ \ref{remark 0}\Rightarrow P\ is\ diagonalizable\ and\  S^+_P(\omega_k)=P_{\omega_k}(P),\\
&S^-_P(\omega_k)=Q_{\omega_k}(P),S^+_P(\omega_k)+S^-_P(\omega_k)=\nu_{\omega_k}(P)=n.\\
&P\in\Omega_{m,k}(2n),H_\alpha''(x)>0,Lemma\ \ref{lem the calc formula of (P,omega)-index}\Rightarrow
\mu(Gr(\omega_k I_{2n}),Gr(P\gamma))\geq \nu_{\omega_k}(P)=n.\\
&P\in\Omega_{m,k}(2n),(\ref{pro splitting number 15})\Rightarrow \mu(Gr(\omega_{k} I_{2n}),Gr(P\hat{\gamma}_x))-\mu(Gr(\omega_{k-1} I_{2n}),Gr(P\hat{\gamma}_x))\\
&=\displaystyle\sum_{\theta_{k-1}\leq\theta<\theta_k}S^+_{PM}(\omega)-\sum_{\theta_{k-1}<\theta\leq\theta_k}S^-_{PM}(\omega)
-S^+_P(\omega_{k-1})+S^-_P(\omega_k).\\
&Proposition\ \ref{bott interation},(\ref{pro 0<v-Spm(omega)<P,Q}),(\ref{pro splitting number 13})\Rightarrow \nu_1(\gamma_x)-S^+_{(PM)^m}(1)\leq\displaystyle\sum^{m-1}_{i=0}Q_{\omega_i}(PM)=\sum^{m-1}_{i=0}P_{\omega_i}(PM)\leq n.
\end{eqnarray*}
With these facts, we can deduce that
\begin{eqnarray*}
&\mu(Gr(I_{2n}),Gr(\gamma_x))+2S^+_{\gamma_x(\tau)}(1)-\nu_1(\gamma_x)\\
=&\displaystyle\sum^{m-1}_{i=0}\mu(Gr(\omega_i I_{2n}),Gr(P\hat{\gamma}_x))+2S^+_{(PM)^m}(1)-\nu_1(\gamma_x)\\
\geq&\mu(Gr(\omega_{k-1} I_{2n}),Gr(P\hat{\gamma}_x))+\mu(Gr(\omega_k I_{2n}),Gr(P\hat{\gamma}_x))+\mu(Gr(\omega_{m-k} I_{2n}),Gr(P\hat{\gamma}_x))\\
&\displaystyle+2S^+_{(PM)^m}(1)-\nu_1(\gamma_x)\\
\geq& 2\nu_{\omega_k}(P)+\nu_{\omega_{m-k}}(P)
+\mu(Gr(\omega_{k-1} I_{2n}),Gr(P\hat{\gamma}_x))-\mu(Gr(\omega_k I_{2n}),Gr(P\hat{\gamma}_x))\\
&+2S^+_{(PM)^m}(1)-\nu_1(\gamma_x)\\
=&\displaystyle3n-(\sum_{\theta_{k-1}\leq\theta<\theta_k}S^+_{PM}(\omega)-\sum_{\theta_{k-1}<\theta\leq\theta_k}S^-_{PM}(\omega))+S^+_P(\omega_{k-1})-S^-_P(\omega_k)\\
&\displaystyle+\sum^{m-1}_{i=0}S^+_{PM}(\omega_i)+S^+_{(PM)^m}(1)-\nu_1(\gamma_x)\\
\geq& \displaystyle 3n-S^-_P(\omega_k)-\sum_{\theta_{k-1}<\theta<\theta_k}S^+_{PM}(\omega)
-\sum^{m-1}_{i=0}P_{\omega_i}(PM)\\
\geq& \displaystyle 3n-S^-_P(\omega_k)-\sum_{\omega\in U}P_{\omega}(PM)\ \geq\ 2n-S^-_P(\omega_k).
\end{eqnarray*}
When $m=2$, $P$ must be $-I_{2n}$. Similarly, we can deduce that
\begin{eqnarray}
&\mu(Gr(I_{2n}),Gr(\gamma_x))+2S^+_{\gamma_x(\tau)}(1)-\nu(\gamma_x)\geq 2n.\label{i+2S-v inequality m=2}
\end{eqnarray}
By Remark \ref{remark 2.1}, (\ref{pro splitting number 2}), $P\in\tilde{\Omega}_{m,k}(2n)$ implies that $n$ is even and $$S^\pm_P(\omega_k)=S^\mp_P(\omega_{m-k})=\frac{n}{2}.$$
Thus we get (\ref{i+2S-v inequality 1}). Similarly, $P\in\tilde{\Omega}^1_{m,k}(2n)(resp.,\ P\in\tilde{\Omega}^{-1}_{m,k}(2n))$ implies that $n$ is odd and
$$S^-_P(\omega_k)=[\frac{n}{2}](resp.,\ [\frac{n}{2}]+1).$$
Then (\ref{i+2S-v inequality 4}) follows.
\end{proof}
Here we note that, when $m=2$, the calculation in the above proof essentially coincides with the proof of C. Liu, Y. Long and C. Zhu in \cite{Liu.Long.Zhu.2002}. Therefore, we omit the calculation.

Then we conclude the following Theorem.
\begin{thm}\label{main theorem 1}
Let $P\in\Omega_m(2n)$ with an integer $m\geq 2$ and $\Sigma\in\mathcal{H}_P(2n)$. It holds that
\begin{eqnarray}\label{equ1 thm1.1}
{}^\#\hat{\mathcal{J}}(\Sigma)\geq [\frac{n_1+n}{2}]=n+[\frac{-\max\{S^-_{P}(\omega),S^-_P(\bar{\omega})\}}{2}].
\end{eqnarray}
Especially, if $m$ is even, then
\begin{eqnarray}\label{equ3 thm1.1}
{}^\#\hat{\mathcal{J}}(\Sigma)\geq n.
\end{eqnarray}
\end{thm}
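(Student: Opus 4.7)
The plan is to combine Proposition \ref{pro lower bound of close cha} with Theorem \ref{thm i+2S-v}. By Lemma \ref{lem relation between two indies} we have $\mu(Gr(I_{2n}),Gr(\gamma_x)) = i(\gamma_x) + n$, so the Maslov-index inequality (\ref{i+2S-v inequality 3}) translates to
\[ i(\gamma_x) + 2S^+_{\gamma_x(\tau)}(1) - \nu(\gamma_x) \;\geq\; n - S^-_P(\omega) \;\geq\; n - \max\{S^-_P(\omega),S^-_P(\bar\omega)\} \;=:\; n_1, \]
whenever the prime $P$-symmetric closed characteristic $(\tau,x)$ satisfies the tight symmetry $x(t)=Px(t+\tau/m)$. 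Feeding this uniform $n_1$ into Proposition \ref{pro lower bound of close cha} then delivers ${}^\#\hat{\mathcal{J}}(\Sigma) \geq [(n_1+n)/2] = n + [-\max\{S^-_P(\omega),S^-_P(\bar\omega)\}/2]$, which is exactly (\ref{equ1 thm1.1}).

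The main technical obstacle is that Theorem \ref{thm i+2S-v} is stated for \emph{prime} $m$ and for orbits with tight symmetry $x(t)=Px(t+\tau/m)$, whereas in Theorem \ref{main theorem 1} the integer $m$ is arbitrary and a general prime $P$-symmetric orbit only obeys $x(t)=Px(t+l\tau/k)$ for some coprime pair $(l,k)$ with $k\mid m$, by Proposition \ref{properties of symmetric orbit}(3). I plan to address this by re-running the Bott-type iteration argument used in the proof of Theorem \ref{thm i+2S-v}, now with $P$ replaced by $P^l$ and $m$ replaced by $k$: Proposition \ref{properties of symmetric orbit}(4) supplies the required iteration identity $\gamma_x(\tau)^l = P^{-k}(P\gamma_x(l\tau/k))^k$, Proposition \ref{bott interation} decomposes the Maslov contribution over $k$-th roots of unity, and the splitting-number identities (\ref{pro splitting number 11}), (\ref{pro similarity}), (\ref{interation of splitting number}) propagate cleanly when one passes from $P$ to $P^l$. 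The symmetrized definition of $n_1$ via $\max\{S^-_P(\omega),S^-_P(\bar\omega)\}$ absorbs the ambiguity in the choice of $l$, so the same uniform lower bound $n_1$ applies to every prime $P$-symmetric orbit.

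Finally, for the refinement ${}^\#\hat{\mathcal{J}}(\Sigma)\geq n$ when $m$ is even, I may assume without loss of generality that $m$ is the actual order of $P$ (otherwise replace $m$ by that order). Then each eigenvalue $\omega_k=e^{2\pi i k/m}$ of $P$ is a primitive $m$-th root of unity, i.e.\ $\gcd(k,m)=1$; since $m$ is even, $k$ is forced to be odd, and therefore $\omega_k^{m/2}=e^{\pi i k}=-1$. Because $P$ is diagonalizable by Remark \ref{remark 0}, this implies $P^{m/2}=-I_{2n}$. Consequently $\Sigma = P^{m/2}\Sigma = -\Sigma$, and the Liu-Long-Zhu result (\ref{equ Long and Zhu 2002}) applies directly to give ${}^\#\hat{\mathcal{J}}(\Sigma)\geq n$, as claimed in (\ref{equ3 thm1.1}).
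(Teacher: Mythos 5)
Your high-level strategy (feed the pointwise bound from Theorem~\ref{thm i+2S-v} into Proposition~\ref{pro lower bound of close cha} via the dictionary $\mu(Gr(I_{2n}),Gr(\gamma_x))=i_1(\gamma_x)+n$ from Lemma~\ref{lem relation between two indies}) is exactly the paper's. Your treatment of the even-$m$ refinement is a genuinely different and cleaner route: after normalizing $m$ to the exact order of $P$, the primitivity of $\omega_k$ forces $P^{m/2}=-I_{2n}$, so $\Sigma=-\Sigma$ and the Liu--Long--Zhu bound (\ref{equ Long and Zhu 2002}) applies directly, bypassing the estimate (\ref{i+2S-v inequality m=2}) and the injection machinery in Proposition~\ref{pro lower bound of close cha}. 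The paper instead routes this through Step~1 (reduction to a prime $m$) and then $m=2$; the two are logically equivalent but your version is more transparent.

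The problem is in the general case. Starting from Proposition~\ref{properties of symmetric orbit}(3), $x(t)=Px(t+l\tau/k)$ with $(l,k)=1$, you propose to iterate the restriction $\hat\gamma=\gamma_x|_{[0,l\tau/k]}$ using the $(P^l,k)$-iteration (or $(P,k)$-iteration; the identity you quote from Proposition~\ref{properties of symmetric orbit}(4) is the latter). But after $k$ iterations over period $l\tau/k$, the path you reconstruct is $\gamma_x|_{[0,l\tau]}=\gamma_x^{\,l}$, not $\gamma_x$. Your argument would therefore produce a lower bound on
\[
i_1(\gamma_x^{\,l})+2S^+_{\gamma_x(\tau)^l}(1)-\nu_1(\gamma_x^{\,l}),
\]
whereas Proposition~\ref{pro lower bound of close cha} requires the bound (\ref{equ the estimation}) on the \emph{prime} orbit, i.e.\ on $i_1(\gamma_x)+2S^+_{\gamma_x(\tau)}(1)-\nu_1(\gamma_x)$. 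Relating the two would need another Bott-type iteration step and there is no reason the inequality survives this passage in the direction you need; the paper sidesteps the issue by first choosing $r$ with $rl\equiv 1\ (\mathrm{mod}\ m)$, so that $x(t)=P^{r}x(t+\tau/m)$ and $\gamma_x$ itself is the $(P^{r},m)$-iteration of $\gamma_x|_{[0,\tau/m]}$. Replacing $P^l$ by this $P^r$ (and $k$ by $m$) is the fix. You also need to address why Theorem~\ref{thm i+2S-v}, stated for prime $m$, applies to the general case: the paper does this with Step~1, while in your framework it suffices to observe that once $m$ is the exact order of $P$ and $m>2$, the indices $k-1,\,k,\,m-k$ used in the proof of Theorem~\ref{thm i+2S-v} are automatically distinct, so that proof goes through unchanged — but this should be stated explicitly rather than left implicit.
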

\begin{proof}
We prove this result by following steps.\\
\textbf{Step 1}: Assume that $m$ is not prime. Denote by $m=m_1p$ and $P_1=P^{m_1}$, where $p$ is a prime factor. Then we have $\Sigma=P_1^p\Sigma$, and it is sufficient to consider $m$ as a prime number. \\
\textbf{Step 2}: Let $m$ be a prime. Assume that $P\in\Omega_{m,k}(2n)$ and $(\tau,x)\in\mathcal{J}_P(\Sigma)$ is a prime closed
characteristic. By Proposition \ref{properties of symmetric orbit}(3), there exist an integer $l\in\{1,\cdots,m-1\}$ such that
 $$x(t)=Px(t+\frac{l\tau}{m}),\ \forall t\in \mathbb{R}.$$ Then we can choose $r\in\{1,\cdots,m-1\}$ such that $$x(t)=P^rx(t+\frac{\tau}{m}),\ \forall t\in \mathbb{R}.$$
By Remark \ref{remark 0}, $P^r$ will be similar to $R(-r\theta)^{\diamond S^+_P(\omega)}\diamond R(r\theta)^{\diamond S^-_P(\omega)},$ where $\omega=e^{\sqrt{-1}\theta}$. It implies
\begin{eqnarray}
S^\pm_P(\omega)=S^\pm_{P^r}(\omega^r).\label{P and Pr}
\end{eqnarray}
If ${}^\#\hat{J}(\Sigma)=+\infty$, we are done. Then we assume that ${}^\#\hat{J}(\Sigma)<+\infty$ and $m>2$. If $\omega_k^r\in\{\omega_1,\cdots,\omega_{[\frac{m}{2}]}\}$, then by (\ref{i+2S-v inequality 3}), (\ref{relation between two indies}) and (\ref{P and Pr}), we obtain that
\begin{eqnarray}\label{inequ 1}
i_1(\gamma_x)+2S^+_{\gamma_x(\tau)}(1)-\nu_1(\gamma_x)\geq n-S^-_{P^r}(\omega_k^r)\geq n-\max\{S^-_{P}(\omega_k),S^-_P(\omega_{m-k})\},
\end{eqnarray}
else if $\omega_{m-k}^r\in\{\omega_1,\cdots,\omega_{[\frac{m}{2}]}\}$, (\ref{inequ 1}) follows as well. Since (\ref{inequ 1}) does not depend on the choice of $(\tau,x)$, we have
$$n_1=n-\max\{S^-_{P}(\omega_k),S^-_P(\omega_{m-k})\}.$$
Then by Proposition \ref{pro lower bound of close cha}, (\ref{equ1 thm1.1}) follows. However, if $m=2$, it follows from (\ref{i+2S-v inequality m=2}) that $n_1=n$. Then combining with Step 1 and Proposition \ref{pro lower bound of close cha}, this theorem holds.
\end{proof}

Then we prove Theorem \ref{main theorem} as follows.
\begin{proof}
When $m$ is even, by Remark \ref{remark 0} and (\ref{equ3 thm1.1}), (\ref{equ2 thm1.1}) follows. We only focus on $m>2$ is odd. If $n$ is even, it follows from the assumption (\ref{assumption 1}) and Remark \ref{remark 0} that $P\in\tilde{\Omega}_{m,k}(2n)$ for some $k\in\{1,\cdots, [\frac{m}{2}]\}$. By Remark \ref{remark 0} and Theorem \ref{main theorem 1},  we have $S^\pm_P(\omega_k)=S^\pm_P(\omega_{m-k})=n_1=\frac{n}{2}$ and (\ref{equ2 thm1.1}) holds. Similarly, If $n$ is odd, $P\in\tilde{\Omega}^{\pm1}_{m,k}(2n)$ for some $k\in\{1,\cdots, [\frac{m}{2}]\}$. By Remark \ref{remark 0} and Theorem \ref{main theorem 1} again, we get $\max(S^+_P(\omega_k),S^-_P(\omega_k))=[\frac{n}{2}]+1$ and $n_1=[\frac{n}{2}]$, then ${}^\#\hat{\mathcal{J}}(\Sigma)\geq [\frac{3n-1}{4}]$. Since $n$ is odd, it holds that $[\frac{3n}{4}]=[\frac{3n-1}{4}]$ and this result follows.
\end{proof}
Proof of Theorem \ref{main theorem 2}.
\begin{proof}
If $m=2$, $P$ is similar to $-I_{2n}$, then this theorem follows from Theorem \ref{main theorem}. Let $m>2$ and $e^{i\theta}=\omega$. From assumption (i) and Remark \ref{remark 0}, we know that $S^-_P(\omega)=0$ and $S^+_P(\omega)=n$. According to (ii), (\ref{relation between two indies}),  and (\ref{i+2S-v inequality 3}), we have $n_1=n$. Then by Proposition \ref{pro lower bound of close cha}, this result follows.
\end{proof}

\label{lastpage}\vspace{3mm}
\noindent
\textbf{Acknowledgement}

\vspace{3mm}
This research would not be possible without the support of many people. The authors are grateful to Professor Yiming Long for his interest and Professor Xijun Hu for many useful advises and patient guidance. Finally, the authors won't forget their beloved friends and family members, for their understanding and endless love through the duration of their studies.
\vspace{3mm}

\noindent



\begin{thebibliography}{}
\bibitem[1]{Bolotin.1978}%
S. Bolotin, Librations of natural dynamic systems, Moscow University Mechanics Bulletin, 1978.
\bibitem[2]{C.L.M.1994}%
S.E. Cappell, R. Lee, E.Y. Miller, On the maslov index, Comm. Pure Appl. Math., 47(1994),121-186.
\bibitem[3]{Dong.Long.2004}%
Y. Dong, Y. Long, Closed characteristics on partically symmetric compact convex hypersurfaces in $\mathbb{R}^{2n}$, J. Differential Equations. 196(2004),226-248.
\bibitem[4]{Duan.Liu.2017}%
H. Duan, H. Liu, Multiplicity and ellipticity of closed characteristics on compact star-shaped hypersurfaces in $\mathbb{R}^{2n}$, Calculus of Variations $\&$ Partial Differential Equations. 3(2017),56-65.
\bibitem[5]{Ekeland.1990}%
I. Ekeland, Convexity Methods in Hamiltonian Mechanics, Springer-Verlag. Berlin. 1990.
\bibitem[6]{Ekeland.Hofer.1987}%
I. Ekeland, H. Hofer, Convex Hamiltonian energy surfaces and their periodic trajectories, Comm. Math. Phys. 113(1987),419-467.
\bibitem[7]{Ekeland.Lassoued.1987}%
I. Ekeland, L. Lassoued, Multiplicite des trajectoires fermees d'un systeme hamiltonien sur une hypersurface d'energie convexe, Ann. IHP. Anal. on. inst. Fourier(Grenoble). 42(1992),165-192.
\bibitem[8]{Fradell.Rabinowitz.1978}%
E.R. Fadell, P.H. Rabinowitz. Generalized cohomological index theories for Lie group actions with an application to bifurcation questions for Hamiltonian systems, Inventiones Mathematicae. 45(1978),139-174.
\bibitem[9]{Hu.Sun.2009}%
X. Hu, S. Sun, index and stability of symmetric periodic orbits in Hamiltonian systems with its application to figure-eight orbit, Commun. Math. Phys., 290(2009),737-777.
\bibitem[10]{Liu.Long.Zhu.2002}%
C. Liu, Y. Long, C. Zhu, Multiplicity of closed characteristics on symmetric convex hypersurfaces in $\mathbb{R}^{2n}$, Math. Ann. 323(2002),201-215.
\bibitem[11]{Liu.Tang.2015}%
C. Liu, S. Tang, Maslov $(P,\omega)$-Index Theory for Symplectic Paths, Advanced Nonlinear Studies. 15(2015),963-990.
\bibitem[12]{Liu.Zhang.2014}%
C. Liu, D. Zhang, Iteration theory of L-index and multiplicity of brake orbits, Journal of Differential Equations. 257(2014),1194-1245.
\bibitem[13]{Liu.Long.Wang.2014}%
H. Liu, Y. Long, W. Wang, Resonance identites and stability of symmetric closed characteristics on conpact star-shaped hypersurfaces in $\mathbb{R}^{2n}$, J. Funct. Anal. 266(2014),5598-5638.
\bibitem[14]{Liu.2014}%
H. Liu, Multiple $P$-invariant closed characteristics on partially symmetric compact convex hypersurfaces in $\mathbb{R}^{2n}$, Cal. Variations and PDEs. 49(2014),1121-1147.
\bibitem[15]{Liu.Zhu.2018}%
H. Liu, G. Zhu, Non-hyperbolic P-invariant closed characteristics on partially symmetric compact convex hypersurfaces, Advanced Nonlinear Studies. 2018.
\bibitem[16]{Y.Long.2002}%
Y. Long, Index Theory for Symplectic Paths with Applications, Progress in Mathematics. No. 207, Birkhauser, Basel, 2002.
\bibitem[17]{Long.zhu.2000}%
Y. Long, C. Zhu, Maslov-type index theorey for symplectic paths and spectral flow II, Chinese. Ann. Math., 21B, 1(2000),89-108.
\bibitem[18]{Long.zhu.2002}%
Y. Long, C. Zhu, Closed characteristics on compact convex hypersurfaces in $\mathbb{R}^{2n}$, Ann. Math. 155(2002),317-368.
\bibitem[19]{Long.Zhang.Zhu.2006}%
Y. Long, D. Zhang, C. Zhu, Multiple brake orbits in bounded convex symmetric domains, Adv. Math. 203(2006),568-635.
\bibitem[20]{Rabinowitz.1978}%
P.H. Rabinowitz, Peroidic solutions of Hamiltonian sysstems, Comm. Pure Appl. Math. 31(1978),157-184.
\bibitem[21]{Rabinowitz.1987}%
P.H. Rabinowitz, On the existence of periodic solutions for a class of symmetric Hamiltonian system, Nonlinear Anal. 11(1987),599-611.
\bibitem[22]{Rabbin.Salamon.1993}%
J. Robbin, D. Salamon, The maslov index for paths, Topology, 32(1993),827-844.
\bibitem[23]{A.Szulkin.1988}%
A. Szulkin, Morse theory and existence of periodic solutions of  convex Hamiltonian systems, Bull. Soc. Math. France. 116(1988),171-197.
\bibitem[24]{A.Szulkin.1989}%
A. Szulkin, An index theory and existence of multiple brake orbits for star-shaped Hamiltonian systems, Math. Ann. 283(1989),241-255.
\bibitem[25]{Wang.2016}%
W. Wang, Closed characteristics on compact convex hypersurfaces in $\mathbb{R}^8$, Adv. Math. 297(2016),93-148.
\bibitem[26]{Wang.2017}%
W. Wang, Closed trajectories on symmetric convex Hamiltonian energy surfaces, Discrete $\&$ Continuous Dynamical Systems. 32(2017),679-701.
\bibitem[27]{Weinstein.1978}%
A. Weinstein, Periodic orbits for convex Hamiltonian systems, Ann. Math. 108(1978),507-518.
\bibitem[28]{W.H.L.2007}%
W. Wang, X. Hu and Y. Long, Resonance identity, stability and multiplicity of closed characteristics on the conpact convex hypersurfaces, Duke Math. J. 139(2007),411-462.
\bibitem[29]{Zhang.2013}%
D. Zhang, P-cyclic symmetric closed characteristics on compact convex P-cyclic symmetric hypersurface in $\mathbb{R}^{2n}$, Discrete Contin. Dyn. Syst. 33(2013),947-964.
\end{thebibliography}
\end{document}